\documentclass[11pt]{article}
\usepackage{amsfonts}
\usepackage{dsfont}
\usepackage{amsfonts}
\usepackage{bbm}
\usepackage{mathrsfs}
\usepackage{color}
\usepackage{hyperref}
\usepackage{mathrsfs}
\usepackage{amsmath}
\usepackage{enumitem}
\allowdisplaybreaks[4]
\usepackage{amsfonts}
\usepackage{amsthm}
\usepackage{amssymb, amsmath, cite}
\usepackage{bm}
\usepackage{amsthm}
\usepackage{verbatim}
\usepackage{graphicx}
\usepackage{color}
\usepackage{amsfonts}
\usepackage{dsfont}
\usepackage{amsfonts}
\usepackage{bbm}
\usepackage{mathrsfs}
\usepackage{color}
\usepackage{graphicx,amsmath,amsfonts,amssymb,amscd,amsthm}
\usepackage{hyperref}
\usepackage[numbers,sort&compress]{natbib}
\usepackage{enumitem}
\setenumerate[1]{itemsep=1pt,partopsep=0pt,parsep=\parskip,topsep=1pt}
\setitemize[1]{itemsep=1pt,partopsep=0pt,parsep=\parskip,topsep=1pt}

\setlength{\textwidth}{6.5truein} \setlength{\textheight}{9.3truein}
\setlength{\oddsidemargin}{-0.0in}
\setlength{\evensidemargin}{-0.0in}
\setlength{\topmargin}{-0.6truein}
\newtheorem{theorem}{Theorem}[section]
\newtheorem{lemma}{Lemma}[section]

\newtheorem{remark}{Remark}[section]
\newtheorem{definition}{Definition}[section]

\usepackage{graphicx,amsmath,amsfonts,amssymb,amscd,amsthm}
\usepackage[numbers,sort&compress]{natbib}
\usepackage{array}
\usepackage{booktabs}
\usepackage{multirow}
\usepackage{makecell}
\usepackage{abstract}
\hypersetup{colorlinks,linkcolor=blue,citecolor=blue}
 \pagestyle{plain}
\newtheoremstyle{kai}
{3pt} {3pt} {} {} {\bfseries} {.} {.5em} {}
\textwidth 16.5cm \textheight 22.5cm \topmargin -1.5cm
\oddsidemargin -0.1cm \evensidemargin -0.3cm
\parskip 0.2cm
\baselineskip 0.6cm \makeatletter \setcounter{page}{1}
\def\EquationsBySection{\def\theequation
{\thesection.\arabic{equation}}
\@addtoreset{equation}{section}}

\newcommand\old[1]{}

 \newcommand{\Bp}{\begin{proof}}
 \newcommand{\Ep}{\end{proof}}
\setcounter{equation}{0}
\renewcommand{\theequation}{\thesection.\arabic{equation}}
  \newcommand{\red}{\textcolor[rgb]{1.00,0.00,0.00}}
 
\DeclareMathOperator{\dive}{div}
\numberwithin{equation}{section}
\addcontentsline{toc}{section}{Acknowledgement}
\begin{document}
\title{\bf A Counterexample for the Principal Eigenvalue of An Elliptic Operator with Large Advection }
\author{{\sc Xueli Bai} \thanks {School of Mathematics and Statistics, Northwestern Polytechnical University, %127 Youyi Road (West), Beilin,
Xi'an 710072, China;  xlbai2015@nwpu.edu.cn}
\and {\sc Xin Xu}
\thanks{Chern Institute of Mathematics, Nankai University, Tianjin 300071, China; 9820210107@nankai.edu.cn
 }\and
 {\sc Kexin Zhang}\thanks {Chern Institute of Mathematics, Nankai University, Tianjin 300071, China; kxzmath@163.com}
 \and {\sc Maolin Zhou}\thanks{Chern Institute of Mathematics and LPMC, Nankai University, Tianjin 300071, China; zhouml6@nankai.edu.cn}}
\date{}
\maketitle
\begin{abstract}
 There are numerous studies focusing on the convergence of the principal eigenvalue $\lambda(s)$ as $s\to+\infty$ corresponding to the elliptic eigenvalue problem
 \begin{align*}
  -\Delta\varphi(x)-2s\mathbf{v}\cdot\nabla\varphi(x)+c(x)\varphi(x)=\lambda(s)\varphi(x),\quad x\in \Omega,
 \end{align*}
  where $\Omega$ is a bounded domain and  the advection term $\mathbf{v}$ under some certain restrictions.
  In this paper, we construct an infinitely oscillating gradient advection term $\mathbf{v}=\nabla m(x)\in C^1(\Omega)$ such that the principal eigenvalue $\lambda(s)$ does not converge as $s\to+\infty$.
As far as we know, this is the first result that guarantee the non-convergence of the principal eigenvalue.\\
   \\
\textbf{Keywords}: Principal eigenvalue, Elliptic operators with advection, Asymptotic behavior\\
\textbf{AMS (2010) Subject Classification}: {35P15, 35P20, 34C25.}
\end{abstract}

\section{Introduction}
The spectral theory of differential operators finds extensive applications in many mathematical fields, such as differential equations, differential  geometry and fluid mechanics. The principal eigenvalue associated with this linear eigenvalue problem plays a crucial role in various aspects \cite{DV1975,DV1976,nadin2011,PW1966,MNP2022,BNV1994,nadin2009}, including speed estimation of traveling fronts \cite{BH2002,BHN2005} and rearrangement inequalities of eigenvalues \cite{HNR2011}, among others.
One particular area of interest lies in studying how different phenomena, such as advection, affect the speed of fronts. This involves exploring singular limits within the eigenvalue problem.
Significant research efforts have been dedicated to investigating the asymptotic behavior of the principal eigenvalue for second-order elliptic operators.

%In the context of reaction-diffusion equations, the nonlinear propagation of fronts typically involves a linear eigenvalue problem to understand the dynamics of the system \cite{DV1975,DV1976,nadin2011,PW1966}. The corresponding principal eigenvalue always plays a fundamental role \cite{BNV1994,nadin2009}, including estimation of traveling fronts \cite{BH2002,BHN2005}, rearrangement inequality of eigenvalue \cite{HNR2011}, etc.
%In particular, it is of interest to study the effects of various phenomena such as advection on the speed of fronts which carries this on some singular limits in the eigenvalue problem.
%There has been a significant amount of research focusing on the asymptotic behavior of the principal eigenvalue for second-order elliptic operators.
%We consider that in a bounded domain $\Omega$, the asymptotic behaviors as $s\to+\infty$ of the principal
There are many researches focusing  on the eigenvalue problem in a bounded domain $\Omega\subset \mathbb{R}^d$, with $ d\ge1$:
  \begin{equation}\label{BF}
  \left\{
\begin{array}{l}
  -\Delta\varphi(x)-2s\mathbf{v}\cdot\nabla\varphi(x)+c(x)\varphi(x)=\lambda(s)\varphi(x),\quad x\in \Omega,\\
  \nabla\varphi\cdot\mathbf{n}=0,\quad x\in\partial\Omega,
  \end{array}\right.
  \end{equation}
  where   $\mathbf{v}$ is a vector field, $c(x)\in C(\bar{\Omega})$ and $\mathbf{n}$ is the unit outward normal on $\partial\Omega$.  It is well known that there exists a unique, positive principal eigenfunction $\varphi(s;x)$  (under normalization),   corresponding to the principal eigenvalue $\lambda(s)$, which is real and simple.\par

When $\mathbf{v}$ is a divergence free  vector field  ($\dive{\mathbf{v}}=0$), satisfying $\mathbf{v}\cdot \mathbf{n}=0$ on $\partial \Omega$, Berestycki, Hamel and Nadirashvili in \cite{Bcmp}   %concerned \eqref{BF} The authors examined the asymptotic behavior of $\lambda_1^{\mathcal{N}}(s)$ as the parameter $s$,  which appears in front of the advection term $\mathbf{v}$,  is large and
   discovered that $\lambda(s)$ converges to the minimum value of a specific functional as $s\to+\infty$.
If $\mathbf{v}$ is a gradient vector field, that is $\mathbf{v}=\nabla m(x)$ for some $m(x)\in C^2(\bar{\Omega})$,
Chen and Lou in \cite{CL} %investigated the eigenvalue problem \eqref{BF} for the  gradient field advection term, that is $\mathbf{v}=\nabla m(x)$ for some $m(x)\in C^2(\bar{\Omega})$.
 established that for the non-degenerate $m(x)$, % (for more precise definition, we referent to Definition 2.1 in \cite{CL}),
  the principal eigenvalue $\lambda(s)$ satisfies
  \begin{align}\label{clr}
  \lim_{s\to+\infty}\lambda(s)=\min_{\mathcal{M}}c(x),
  \end{align}
  where $\mathcal{M}$ is the  set of the local maximum points of $m(x)$.
  In one dimensional space, if   $m(x)$ vanishes on the interval $[a,b]$ and changes sign finite times, Peng and Zhou \cite{PZ}  investigated that
$\lambda(s)$  converges. % and it can be determined by the behavior of $m(r)$ near the degenerate interval $[a,b]$.
% could potentially converge to the principal eigenvalue of
%$$
%-\varphi''(x)+c(x)\varphi(x)=\lambda\varphi(x),\quad x\in(a,b),
%$$
%coupled with Neumann or Dirichlet boundary conditions at $x=a$ and $x=b$  depending on
%the behavior of $m(x)$ near the endpoints $x=a$ and $x=b$ of degenerate part (flat piece) $[a,b]\subset[0,1]$.
%For the convenience of readers,  we present their definition here:
If $\mathbf{v}$ is a general vector field, in the two-dimensional space, in the work referred to
\cite{LLPZ} by Liu, Lou and Zhou, it is established that if  the corresponding dynamical system
$
\dot{\mathbf{X}}=\mathbf{v}(\mathbf{X})
$
possesses  finite fixed points, degenerate regions, closed orbits, stable limit cycles and homoclinic orbits connected by saddles,
the principal eigenvalue $\lambda(s)$ converges as $s\to+\infty$.
\par

Regarding other related work, for example the large advection problem for the linear time-periodic parabolic operator and
the small diffusion problem for the linear elliptic problem or the linear time-periodic parabolic operator, that is multiply a small parameter $\varepsilon$ in front of the  diffusion term  to investigate the limiting behavior of $\lambda(\varepsilon)$ as $\varepsilon\to 0$. There are also many convergence results, and we may refer to \cite{FA7273,DEF73,LLPZd,LLPZx}.

\par
Based on the existing results, it is known that for the advection term  under certain restrictions, such as finite oscillating, the principal eigenvalue $\lambda(s)$ of \eqref{BF} converges as $s\to+\infty$.
Whereas, for a general advection term, the convergence of $\lambda(s)$  is still unknown.
It is worth mentioning that Berestycki, Hamel and Nadirashvili in their remarkable work \cite{Bcmp} proved  that under the Neumann boundary condition, boundedness means the existence of limit for a divergence free vector field and asked an open problem that for the equation in \eqref{BF},  with Robin type boundary conditions and a divergence free $\mathbf{v}$, does boundedness mean convergence? Investigating the convergence or non-convergence of $\lambda(s)$ remains a highly interesting topic of research.
\par
%In this paper, we construct an example that the principal eigenvalue $\lambda_1^{\mathcal{N}}(s)$ of \eqref{eq} does not converge as $s\to+\infty$. Here is our main result:

In this paper, we consider the radial symmetric problem of  \eqref{BF} in the domain $\Omega=B_{1}(0)$, with the gradient vector field $\mathbf{v}(x)=\nabla m(x)$. Assume that $m(x)$ and $c(x)$ are radial symmetric and we write  $m(r)=m(|x|)$ and $c(r)=c(|x|)$ where
 $r=|x|$, then equation \eqref{BF} converts into
 \begin{equation}
\left\{
\begin{array}{l}
-\varphi''(r)-\frac{d-1}{r}\varphi'(r)-2sm'(r)\varphi'(r)+c(r)\varphi(r)=\lambda(s,m(r))\varphi(r),\quad 0<r<1,\\
\varphi'(0)=\varphi'(1)=0,
\end{array}\right.\label{eq}
\end{equation}
where $\lambda(s,m(r))$ is the principal eigenvalue associated with $m(r)$ and $\varphi(s;r)$ is the positive principal eigenfunction with the normalization $\int_0^1r^{d-1}e^{2sm(x)}(|\varphi'^2(s;x)+c(x)\varphi^2(s;x)|)dx=1$.
We construct an infinitely oscillating  function $ m(r)\in C^1([0,1])$ %where $m(x)$ is infinitely oscillating
such that the corresponding principal eigenvalue $\lambda(s,m(r))$ does not converge as $s\to+\infty$. To our knowledge, this is the first counterexample  to reveal the non-existence of the limit of principal eigenvalue $\lambda(s,m(r))$. This result also implies that  for highly complex, infinitely oscillating advection term, determining the convergence behavior of $\lambda(s,m(r))$ is fundamentally impossible unless appropriate conditions are imposed on the advection term, regardless of whether for $\lambda(s)$ of equation \eqref{BF} or $\lambda(s,m(r))$ of equation \eqref{eq}.

To state our main result, we first introduce  a definition and  two hypotheses. In the following, we consider on the interval $[\frac{1}{3},\frac{2}{3}]$. It is also feasible for general interval $[a,b]$.
\begin{definition}\label{d2}
Define $\lambda^{ij}$ to be the principal eigenvalue of the elliptic principal eigenvalue problem
\begin{align*}
-\varphi''(r)-\frac{d-1}{r}\varphi'(r)+c(r)\varphi(r)=\lambda\varphi(r),\quad r\in\left(\frac{1}{3},\frac{2}{3}\right)
\end{align*}
subject to the Neumann (Dirichlet, respectively) boundary condition at the left boundary point $x=\frac{1}{3}$ if $i=\mathcal{N}$ (if $i=\mathcal{D}$, respectively) and
subject to the Neumann (Dirichlet, respectively) boundary condition at the right boundary point $x=\frac{2}{3}$ if $i=\mathcal{N}$ (if $j=\mathcal{D}$, respectively).
\end{definition}
\noindent\textbf{(H1)}
 $c(r)>\lambda^{\mathcal{DD}} \text{ as } x\in [0,\frac{1}{3}]\cup[\frac{2}{3},1]$.
 \\
 \textbf{(H2)}
 $c(r)>\lambda^{\mathcal{NN}} \text{ as } x\in [0,\frac{1}{3}]\cup[\frac{2}{3},1]$.
 \par
 It is noted that if (H1) holds true then (H2) holds true by the maximum principle \cite{GT}.
\begin{theorem}\label{robin}
Suppose that  $c(r)$ satisfies (H1), then there exists an infinitely oscillating $m(r)\in C^1([0,1])$ such that the principal eigenvalue $\lambda(s,m(r))$  in \eqref{eq} does not converge as $s\to+\infty$.
\end{theorem}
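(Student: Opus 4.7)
I will construct a radial $m\in C^1([0,1])$ with a flat plateau at its maximum value $M$ on $[1/3,2/3]$ and an infinitely oscillating profile outside this interval, so that $\lambda(s,m)$ has two distinct accumulation points as $s\to+\infty$. The principal tool is the variational characterization: equation \eqref{eq} is self-adjoint with respect to the weight $r^{d-1}e^{2sm(r)}\,dr$, giving
\[
\lambda(s,m)=\min_{\varphi\not\equiv 0}\frac{\int_0^1 r^{d-1}e^{2sm(r)}\bigl(|\varphi'(r)|^2+c(r)\varphi(r)^2\bigr)\,dr}{\int_0^1 r^{d-1}e^{2sm(r)}\varphi(r)^2\,dr}.
\]
As $s\to+\infty$, the asymptotics are governed by the relative outer mass $\Sigma(s):=\int_{[0,1]\setminus[1/3,2/3]} r^{d-1} e^{2s(m(r)-M)}\,dr$.

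\textbf{Construction.} Set $m\equiv M>0$ on $[1/3,2/3]$. On $(2/3,1]$ (symmetrically on $[0,1/3)$), insert a sequence of smooth $C^1$ bumps peaking at points $x_n\downarrow 2/3$ with heights $M-\delta_n$ and widths $w_n$ ($\delta_n\downarrow 0$), separated by valleys with $m\le M-\eta$ for a fixed $\eta>0$. The parameters $(\delta_n,w_n)$ are chosen via a geometric/dyadic scheme so that $\Sigma(s)$ has two distinct accumulation behaviors: $\Sigma(s_k)\to 0$ along one subsequence $s_k\to+\infty$, and $\Sigma(s_k')\ge\Sigma_0>0$ along another subsequence $s_k'\to+\infty$. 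The fixed valley depth $\eta$ guarantees smooth interpolation giving $m\in C^1([0,1])$.

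\textbf{Two regimes.} Along $\{s_k\}$, the outer mass vanishes, so the weighted measure concentrates on the plateau; plugging the Neumann principal eigenfunction $\psi_0$ of $-\psi''-\tfrac{d-1}{r}\psi'+c\psi=\lambda\psi$ on $[1/3,2/3]$, extended to $[0,1]$ by the constants $\psi_0(1/3)$ and $\psi_0(2/3)$ (a $C^1$ test function since $\psi_0'(1/3)=\psi_0'(2/3)=0$), the Rayleigh quotient converges to $\lambda^{\mathcal{NN}}$; a matching lower bound via concentration arguments yields $\lambda(s_k,m)\to \lambda^{\mathcal{NN}}$. Along $\{s_k'\}$, the outer mass is non-negligible, and hypothesis (H1) becomes decisive: for every test function $\varphi$, applying the Neumann lower bound on the plateau and $c>\lambda^{\mathcal{DD}}$ on the exterior gives
\[
\text{Rayleigh}(\varphi)\;\ge\; \lambda^{\mathcal{NN}}+(\lambda^{\mathcal{DD}}-\lambda^{\mathcal{NN}})\cdot\frac{\int_{[0,1]\setminus[1/3,2/3]} r^{d-1}e^{2sm}\varphi^2\,dr}{\int_0^1 r^{d-1}e^{2sm}\varphi^2\,dr},
\]
so any significant outer mass ratio forces the Rayleigh quotient strictly above $\lambda^{\mathcal{NN}}$. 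On the other hand, the Dirichlet principal eigenfunction on $[1/3,2/3]$ extended by zero gives Rayleigh quotient $=\lambda^{\mathcal{DD}}$. Optimizing over the competing Neumann-like (reduced plateau eigenvalue but outer $c$-penalty) and Dirichlet-like (no outer mass but increased plateau eigenvalue) profiles yields $\lambda(s_k',m)\to L$ for some $L\in(\lambda^{\mathcal{NN}},\lambda^{\mathcal{DD}}]$.

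\textbf{Conclusion and main obstacle.} Since $L>\lambda^{\mathcal{NN}}$, the two accumulation points are distinct and $\lambda(s,m)$ does not converge. The main technical obstacle is the sharp lower bound along $\{s_k'\}$: one must show that, uniformly over test functions, either the outer-mass ratio remains bounded below (forcing a strict-inequality penalty via (H1)) or the test function is sufficiently Dirichlet-like on the plateau that the inside Rayleigh is at least $\lambda^{\mathcal{DD}}-o(1)$. This dichotomy requires a two-scale balance analysis exploiting $\Sigma(s_k')\ge\Sigma_0$, together with a delicate tuning of $(\delta_n,w_n)$ that simultaneously realizes both accumulation regimes along infinite subsequences while keeping $m\in C^1$.
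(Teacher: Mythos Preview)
Your construction has a fatal flaw: it cannot produce non-convergence. You take the plateau $[1/3,2/3]$ to be the \emph{strict} global maximum of $m$ (every exterior bump peaks at height $M-\delta_n<M$). But then for each $r\notin[1/3,2/3]$ one has $m(r)<M$, so $e^{2s(m(r)-M)}\to 0$ pointwise and is dominated by $1$; by dominated convergence $\Sigma(s)\to 0$ as $s\to\infty$, and it cannot remain $\ge\Sigma_0>0$ along any subsequence. With $\Sigma(s)\to 0$, your own constant-extension test function already gives Rayleigh quotient $\to\lambda^{\mathcal{NN}}$, hence $\limsup_{s\to\infty}\lambda(s,m)\le\lambda^{\mathcal{NN}}$. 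Combined with the general lower bound $\liminf\ge\lambda^{\mathcal{NN}}$ (which needs only (H2), a consequence of (H1), and $m\equiv M$ on the plateau), this forces $\lambda(s,m)\to\lambda^{\mathcal{NN}}$. Your proposed dichotomy (``outer-mass ratio bounded below vs.\ Dirichlet-like on the plateau'') never arises, because the first alternative is vacuous. A secondary inconsistency: bumps accumulating at $2/3$ separated by valleys of \emph{fixed} depth $\eta$ give $\liminf_{r\to 2/3^+}m(r)\le M-\eta<M=m(2/3)$, so such an $m$ is not even continuous.

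The paper's mechanism is essentially different: the plateau is \emph{not} taken to be the maximum of $m$. One sets $m\equiv 0$ on $[1/3,2/3]$ and lets $m$ oscillate infinitely often \emph{both above and below} $0$ just outside. Regions where $m>0$ carry exponentially large weight $e^{2sm}$, which forces the principal eigenfunction to be small there and hence to satisfy an effective Dirichlet condition at the plateau endpoints, driving $\lambda(s)\to\lambda^{\mathcal{DD}}$; regions where $m<0$ let a test function decay with negligible energy, yielding the Neumann limit $\lambda^{\mathcal{NN}}$. The counterexample $\hat m$ is then built by an iterative sign-flipping scheme near $r=1/3$ and $r=2/3$: start with $m_1$ in the ``Dirichlet class'' $S_{\mathcal{DD}}$ (so $\lambda(s,m_1)\to\lambda^{\mathcal{DD}}$), fix a large $s_1$, and flip the sign of $m_1$ on a tiny interval near the plateau edge to obtain $m_2$ in the ``Neumann class'' $S_{\mathcal{NN}}$; a continuity estimate $|\lambda(s,m_1)-\lambda(s,m_2)|\le c^*(e^{4s\|m_1-m_2\|_\infty}-1)$ keeps $\lambda(s_1,m_2)$ close to $\lambda^{\mathcal{DD}}$ while $\lambda(s,m_2)\to\lambda^{\mathcal{NN}}$. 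Iterating produces $m_n\to\hat m\in C^1$ and two sequences $s_{2n+1},s_{2n}\to\infty$ along which $\lambda(\cdot,\hat m)$ tends to $\lambda^{\mathcal{DD}}$ and $\lambda^{\mathcal{NN}}$ respectively. The ingredient you are missing is precisely that $m$ must rise \emph{above} the plateau level to ever force the Dirichlet endpoint behavior.
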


%In this paper, we first investigate the convergence of $\lambda_1^{\mathcal{N}}(x)$ of \eqref{eq} as $s\to+\infty$, corresponding to some infinitely oscillating $m(x)$ with certain assumptions and on this basis, we  %examine this problem from another point of view: under the Neumann boundary condition, and we construct a counterexample in one dimensional space with  some other  infinitely oscillating  $m(x)$. We demonstrate that in this case, the principal eigenvalue $\lambda_1^{\mathcal{N}}(s)$ does not converge as $s\to+\infty$.
% It is worth noting that due to the infinite oscillations, the function $m'(x)$ cannot be divergence free in one dimension. As a result, we do not provide a solution to the problem proposed in \cite{Bcmp}.
% Nevertheless, we believe that our counterexample can provide insights and inspirations for addressing the open problem in \cite{Bcmp} and even for more general fields $\mathbf{v}$.

%As we know, this theorem is the first result to show that the limit of the principal eigenvalue $\lambda_1^{\mathcal{N}}(s)$ does not exist as $s\to+\infty$.
%Furthermore, we want to emphasize that the counterexample can be extended to higher dimensional scenarios by considering radial symmetry.
%This indicates that understanding the asymptotic behavior of the principal eigenvalue for the advection term with complex oscillating behavior is exceedingly challenging, regardless of whether it is in one-dimensional or high-dimensional space.
\par

The sketch of the proof of Theorem \ref{robin} is as follows: we first prove that the limits of the principal eigenvalue $\lambda(s,m(r))$ corresponding to two types of infinitely oscillating $m(r)$   tend to $\lambda^{\mathcal{DD}}$ and $\lambda^{\mathcal{NN}}$, respectively (see Section 3 and Section 4).
In Section 5, we then established the continuity dependence of $\lambda(s,m(r))$ on   $m(r)$.
This is crucial as it enables us to finely adjust $m(r)$.
Specifically, we  select one type $m_1(r)$ whose principal eigenvalue $\lambda(s,m_1(r))$ converges to $\lambda_1^{\mathcal{DD}}$.
For some $s=s_1$, by making a small adjustment to obtain the other type $m_2(r)$, $\lambda(s,m_2(r))$ remains  closed to $\lambda^{\mathcal{DD}}$  while converging to $\lambda^{\mathcal{NN}}$ for large $s$. Based on this strategy, we can select two sequences $\{s_{2n}\}_{n=1}^{+\infty}$ and $\{s_{2n+1}\}_{n=1}^{+\infty}$ of $s\to+\infty$ and  construct two type function sequences $\{ m_{2n}(r) \}_{n=1}^{+\infty}$ and $\{m_{2n+1}(r)\}_{n=1}^{+\infty}$
whose limits of principal eigenvalues converge to two different constants respectively.
Finally, we obtain a counterexample  $\hat{m}(r)=\lim_{n\to +\infty}m_n(x)$, where the corresponding principal eigenvalue does not converge. This completes the proof of Theorem \ref{robin}.% and two distinct subsequences within $\{\hat{\lambda}_1^{\mathcal{N}}(s)\}_{s>0}$  that converge to $\lambda_1^{\mathcal{NN}}(a,b)$ and $\lambda_1^{\mathcal{DD}}(a,b)$ respectively, where $\hat{\lambda}_1^{\mathcal{N}}(s)$ is the corresponding principal eigenvalue of  $\hat{m}(r)$

\section{Preliminaries}\label{S:pre}

Note that the principal eigenvalue $\lambda(s,m(r))$ of \eqref{eq} can be characterized by
\begin{eqnarray}
\lambda(s,m(r))
&=&\min_{\int_0^1 r^{d-1}e^{2sm(r)}\varphi^2(r)dr=1 }\int_0^1 r^{d-1}e^{2sm(r)}\left(|\varphi'(r)|^2+c(r)|\varphi(r)|^2\right)dr.\label{voe}
%&=&\min_{\int_0^1 w^2dx=1}\int_0^1 |w'-sm'(x)w|^2+c(x)|w|^2dx,\label{voa}
\end{eqnarray}
Denote $c_*\triangleq\min_{r\in[0,1]}c(r)$ and $c^*\triangleq\max_{r\in[0,1]}c(r)$. From the variational formulation \eqref{voe}, we can establish the following upper and lower bounds of the principal eigenvalue,
\begin{equation}
c_*\le \lambda(s,m(r))\le  c^*\quad \forall\ s>0.\label{ubd}
\end{equation}
Without loss of generality, we assume that $c(r)>0$ on $[0,1]$ in the rest of this paper. Otherwise, we can modify equation \eqref{eq} by replacing  $(\lambda(s,m(r)),c(r))$ with
$
(\lambda(s,m(r))+\max_{[0,1]}|c(r)|,c(r)+\max_{[0,1]}|c(r)|).
$
This ensures that $0<c_*\leq c^*$. \par

We denote $\varphi(s;r)$ as the principal eigenfunction corresponding to $\lambda(s,m(r))$ of \eqref{eq} for any fixed $s$, satisfying the normalization condition $\int_0^1r^{d-1}e^{2sm(r)}|\varphi(s;r)|^2dr=1$ for each $s>0$.
%Observe that by taking the transformation $\varphi(s;r)=e^{-sm(r)}r^{\frac{1-d}{2}}w(s;r)$ in \eqref{eq}
%, we can find that $(\lambda_1^{\mathcal{N}}(s),w(s;r))$ satisfies
%\red{
%\begin{eqnarray*}
%\left\{
%\begin{array}{l}
%-w''(s;x)+[s^2(m'(x))^2+sm''(x)+c(x)]w(s;x)=\lambda_1^{\mathcal{N}}(s)w(s;x),\quad x\in(0,1),\\
%w'(0)-sw(0)m'(0)=w'(1)-sw(1)m'(1)=0,
%\end{array}\right.
%\end{eqnarray*}
%}
%where $'$ represents the derivative with respect to $r$.
%by using the transformation $\varphi=e^{-sm(x)}w$ in \eqref{eq}.
In light of \eqref{voe}, the principal eigenvalue $\lambda(s,m(r))$ can be equivalently characterized by
\begin{eqnarray}
\lambda(s,m(r))
%&=&\min_{\int_0^1 e^{2sm(x)}\varphi^2dx=1 }\int_0^1 e^{2sm(x)}\left(|\varphi'|^2+c(x)|\varphi|^2\right)dx.\label{voe}
&=&\min_{\int_0^1 w(s;r)^2dr=1}\int_0^1 \left|w'(s;r)-sm'(r)w(s;r)-\frac{d-1}{2r}w(s;r)\right|^2+c(r)|w(s;r)|^2dr.\label{voa}
\end{eqnarray}
 by taking the transformation $w(s;r)=e^{sm(r)}r^{\frac{d-1}{2}}\varphi(s;r)$.
%It is noted that for any $s>0$, $w(s;x)(=e^{sm(x)}\varphi(s;x))$ represents the positive solution to \eqref{fw} with the normalization
%$$
%\int_0^1w^2(s;x)dx=\int_0^1e^{2sm(x)}\varphi^2(s;x)dx=1.
%$$
Recall that $\{w^2(s;\cdot)\}_{s>0}$ is weakly compact, which means that there exists a sequence $\{s_i\}_{i=1}^{+\infty}$ with $s_i\to+\infty$ as $i\to+\infty$ such that
\begin{equation}\label{weakc}
\lim_{i\to+\infty}\int_0^1w^2(s_i;r)\zeta(r)dr=\int_{[0,1]}\zeta(r)\mu(dr),\quad \forall \zeta\in C([0,1]),
\end{equation}
where $\mu$ is a certain probability measure (see \cite{CL}).

Now, we present a lemma to demonstrate the convergence property of $\varphi(s;r)$ on the interval $(\frac{1}{3},\frac{2}{3})$ as $s\to+\infty$. And for convenient use, we denote by $C$ for any positive constant which may change from line to line in the  entire paper.
\begin{lemma}\label{converge}
Suppose that $m(r)=0$ on the interval $[\frac{1}{3},\frac{2}{3}]$, then
there exist a sequence $\{s_i\}_{i=1}^{+\infty}$ with $s_i\to+\infty$ as $i\to+\infty$ and a function $\varphi_*(r)$ such that $\varphi(s_i;r)\to \varphi_*(r)$ in $C^1_{loc}([\frac{1}{3},\frac{2}{3}])$ as $i\to+\infty$. Moreover, $\varphi_*(r)$ satisfies
\begin{equation}
-\varphi_*''(r)-\frac{d-1}{r}\varphi_*'(r)+c(r)\varphi_*(r)=\lambda_*\varphi_*(r),\quad r\in \left(\frac{1}{3},\frac{2}{3}\right),\label{v*}
\end{equation}
in the weak sense, where $\lambda_*=\liminf_{s\to+\infty}\lambda(s,m(r))$.
\end{lemma}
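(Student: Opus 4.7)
The plan is to extract a subsequence of $s$ along which the eigenfunctions converge in $C^1_{loc}$ on the interval where $m$ vanishes, and then pass to the limit in the PDE. First, by definition of $\lambda_*=\liminf_{s\to+\infty}\lambda(s,m(r))$, I would choose a sequence $s_i\to+\infty$ with $\lambda(s_i,m(r))\to\lambda_*$. The remainder of the argument consists in extracting a further subsequence along which $\varphi(s_i;\cdot)$ converges in $C^1_{loc}$.

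Second, I would establish a uniform $H^1$-bound for $\varphi(s;\cdot)$ on $[\frac{1}{3},\frac{2}{3}]$. Because $m\equiv 0$ on this interval, the weight $e^{2sm(r)}$ equals $1$ there, so the normalization $\int_0^1 r^{d-1}e^{2sm(r)}\varphi^2(s;r)\,dr=1$ and the lower bound $r^{d-1}\ge (1/3)^{d-1}$ yield a uniform $L^2$-bound for $\varphi(s;\cdot)$ on $[\frac{1}{3},\frac{2}{3}]$. The variational representation \eqref{voe}, realized by the principal eigenfunction, gives
\begin{equation*}
\lambda(s,m(r))=\int_0^1 r^{d-1}e^{2sm(r)}\bigl(|\varphi'(s;r)|^2+c(r)\varphi^2(s;r)\bigr)\,dr\ge \int_{1/3}^{2/3} r^{d-1}\bigl(|\varphi'(s;r)|^2+c(r)\varphi^2(s;r)\bigr)\,dr.
\end{equation*}
Combined with the upper bound $\lambda(s,m(r))\le c^*$ from \eqref{ubd} and $c(r)>0$, this furnishes uniform control on $\int_{1/3}^{2/3}|\varphi'(s;r)|^2\,dr$, hence a uniform $H^1$-bound for $\varphi(s;\cdot)$ on $[\frac{1}{3},\frac{2}{3}]$.

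Third, observe that on $(\frac{1}{3},\frac{2}{3})$ the hypothesis $m\equiv 0$ makes the advection coefficient $m'(r)$ vanish identically, so \eqref{eq} reduces there to
\begin{equation*}
-\varphi''(s;r)-\tfrac{d-1}{r}\varphi'(s;r)+c(r)\varphi(s;r)=\lambda(s,m(r))\varphi(s;r).
\end{equation*}
Since $\lambda(s,m(r))$ is uniformly bounded and $c$ is continuous, standard interior elliptic (in fact one-dimensional ODE) regularity allows bootstrapping from the uniform $H^1$-bound to a uniform $C^{1,\alpha}$-bound on every compact sub-interval of $(\frac{1}{3},\frac{2}{3})$. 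Arzelà--Ascoli combined with a diagonal argument over an exhaustion by compact sub-intervals then yields a further subsequence (still labeled $s_i$) such that $\varphi(s_i;\cdot)\to\varphi_*(r)$ in $C^1_{loc}([\frac{1}{3},\frac{2}{3}])$.

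Finally, I would pass to the limit in the weak form: for any test function $\zeta\in C_c^\infty((\frac{1}{3},\frac{2}{3}))$, multiply the reduced ODE by $\zeta$, integrate by parts once, and let $i\to\infty$; the $C^1_{loc}$-convergence of $\varphi(s_i;\cdot)$ together with $\lambda(s_i,m(r))\to\lambda_*$ lets us pass to the limit in every term, producing \eqref{v*} in the weak sense. The main technical point is not a deep obstacle but a careful sequencing of subsequence extractions: the $\lambda_*$-realizing subsequence must be chosen first, and all later extractions only refine it, which preserves $\lambda(s_i,m(r))\to\lambda_*$. Note that the conclusion allows $\varphi_*\equiv 0$; non-triviality is not asserted by the lemma and would require separate arguments using (H1) or (H2).
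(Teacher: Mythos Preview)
Your proposal is correct and follows essentially the same approach as the paper: select a subsequence realizing $\lambda_*$, use the variational identity together with $m\equiv 0$ on $[\tfrac{1}{3},\tfrac{2}{3}]$ to get a uniform $H^1$-bound there, observe that the advection term vanishes so the equation reduces to the simple ODE, bootstrap via interior regularity to $C^1_{loc}$-compactness, and pass to the limit. The only cosmetic difference is that the paper additionally arranges the initial subsequence so that $w^2(s_i;\cdot)$ converges weakly to a probability measure (property \eqref{weakc}); this is not needed for the lemma as stated but is used in later lower-bound arguments, so you may wish to include it for convenience downstream.
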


\begin{proof}
First we select a sequence $\{s_i\}_{i=1}^{+\infty}$ such that \eqref{weakc} and $\lim_{i\to+\infty}\lambda_1^{\mathcal{N}}(s_i,m(r))=\lambda_*$ hold true.
Since $m(r)=0$ for $r\in[\frac{1}{3},\frac{2}{3}]$, by \eqref{ubd} and the variational characterization of $\lambda_1^{\mathcal{N}}(s,m(r))$, we have
\begin{align*}
c^*&\ge \lambda(s_i,m(r))\ge \left(\frac{1}{3}\right)^{d-1}\int_{\frac{1}{3}}^{\frac{2}{3}}  |\varphi'(s_i;r)|^2+c(r)|\varphi(s_i;r)|^2dr
\ge C\int_{\frac{1}{3}}^{\frac{2}{3}} |\varphi'(s_i;r)|^2+|\varphi(s_i;r)|^2dr,
\end{align*}
where $C=(\frac{1}{3})^{d-1}\min\{1,c_*\}>0$.
According to the compact embedding theorem, there exist a subsequence of $\{s_{i}\}_{i=1}^{+\infty}$, which is denoted by itself, such that
\begin{equation}
\varphi(s_{i};r)\to \varphi_*(r), \quad\text{ in }\ C\left(\left[\frac{1}{3},\frac{2}{3}\right]\right),\quad i\to+\infty,\label{m}
\end{equation}
and a constant $C>0$, independent of $i$, such that
\begin{align}\label{wb}
|\varphi(s_{i};r)|\le C,\quad r\in\left[\frac{1}{3},\frac{2}{3}\right].
\end{align}
Moreover,  from equation \eqref{eq}, we have
\begin{equation}\label{d}
-\varphi''(s_{i};r)-\frac{d-1}{r}\varphi'(s_i;r)+c(r)\varphi(s_{i};r)=\lambda(s_{i},m(r))\varphi(s_{i};r),\quad r\in\left(\frac{1}{3},\frac{2}{3}\right).
\end{equation}
So by \eqref{wb}, the boundedness of terms in \eqref{d} and the compactness, we could further extract a subsequence of $\{s_{i}\}_{i=1}^{+\infty}$, denoted by itself, such that $\varphi(s_{i};r)\to\varphi_*(r)$ in $C^1_{loc}((\frac{1}{3},\frac{2}{3}))$ and  $\varphi_*(r)$ satisfies
\begin{equation*}
-\varphi_*''(r)-\frac{d-1}{r}\varphi_*'(r)+c(r)\varphi_*(r)=\lambda_*\varphi_*(r),
\end{equation*}
in the weak sense.
\end{proof}

%Since $\lambda_1^{\mathcal{N}}(s_{i})$ is bounded in \eqref{ubd}, by a compactness consideration, there is a subsequence of $\{s_{i}\}_{i=1}^{+\infty}$, denoted by itself, such that $\varphi(s_{i};r)\to\varphi_*(r)$ in $C^1_{loc}((a,b))$. where $\varphi_*(r)$ satisfies
%\begin{equation*}
%-\varphi_*''(r)-\frac{n-1}{r}\varphi_*'(r)+c(r)\varphi_*(r)=\lambda_*\varphi_*(r),
%\end{equation*}
%in the weak sense. Furthermore, one can apply \eqref{m} to conclude that $\varphi_*(r)\in C^1([a,b])$.

%Moreover, we can select a sequence $s_i\}$
%For reading convenience, we write a Lemma about $\mu$ in the following
%\begin{lemma}[Lemma 2.2(i) in Peng and Zhou \cite{PZ}]\label{l2}
%Assume that there exists a closed interval $[a,b]\subset[0,1]$ such that
%$$
%\lim_{i\to+\infty}\int_a^bw^2(s_i;x)dx=0.
%$$
%Then, $\mu((a,b))=0$.
%\end{lemma}
%\par
%\item[$\ast$]

\medskip
As the assumption in Lemma \ref{converge}, throughout this paper, we always assume that the function $m(r)$ in \eqref{eq} vanishes on the interval $[\frac{1}{3},\frac{2}{3}]$ and $m(r)=m(1-r)$ on $[0,1]$. Here we give two types of test function $\tilde{m}(r)$ and $\bar{m}(r)$ to describe $m(r)$ in the following.
\begin{definition}\label{DM}
(1). Let $\delta\in(0,\frac{1}{3})$ and  $\alpha,\ \beta $  be constants satisfying the conditions $\frac{1}{6}<\alpha< \beta<1$ and $\sum_{i=1}^{+\infty}(\alpha^i+\beta^i)=\frac{1}{3}-\delta$. We consider two sequences of points $\{x_n\}_{n=0}^{+\infty}$ and $\{y_n\}_{n=0}^{+\infty}$ on $[\delta,\frac{1}{3})$ with $x_0=\delta$, $y_0=\delta+\alpha$, $x_n=\delta+\sum_{i=1}^{n}(\alpha^i+\beta^i)$ and $y_n=\delta+\sum_{i=1}^{n}(\alpha^i+\beta^i)+\alpha^{n+1}$ for $n\ge1$. Then we define $\tilde{m}(\alpha,\beta;r)$ for $r\in[\delta,\frac{1}{3})$ as follows
\begin{equation}\label{tilm}
\tilde{m}(\alpha,\beta;r) = \left\{ \begin{array}{lll}
-\left(\frac{1}{6}\right)^n, &r\in [x_n, y_n),\,\, n\in \mathbb{N},\\
2 \left(\frac{1}{6}\right)^n, &r\in [ y_n,x_{n+1}),\,\,n\in \mathbb{N}.
 \end{array} \right.
\end{equation}
For $r\in(\frac{2}{3},1-\delta]$, we define $\tilde{m}(\alpha,\beta;r)=\tilde{m}(\alpha,\beta;1-r)$.
%On $[b,1-\delta]$,  define $\tilde{m}[h,\alpha,\beta,\nu](x)=\tilde{m}[h,\alpha,\beta,\nu](a+b-x)$.\par
\par
(2). Let $\delta\in(0,\frac{1}{3})$ and  $\alpha$ be constant satisfying $\frac{1}{6}<\alpha<1$ and $2\sum_{i=1}^{+\infty} \alpha^i=\frac{1}{3}-\delta$.  We consider two sequences of points $\{X_n\}_{n=1}^{+\infty}$ and $\{Y_n\}_{n=0}^{+\infty}$ on $[\delta,\frac{1}{3})$ with
$Y_0=\delta$, and
$X_n=\delta+2\sum_{i=1}^n \alpha^i-\alpha^n$,
$Y_n=\delta+2\sum_{i=1}^n \alpha^i$ for $n\ge1$.
Then we define $\bar{m}(\alpha;r)$ for $r\in [0,\frac{1}{3})$ as follows
\begin{equation}\label{barm}
\bar{m}(\alpha;r) = \left\{ \begin{array}{lll}
\left(\frac{1}{6}\right)^{n},& r\in[Y_{n-1},X_n],\,n\in\mathbb{N}^+,\\
-2 \left(\frac{1}{6}\right)^{n},&r\in[X_n,Y_n],\, n\in\mathbb{N}^+.
 \end{array} \right.
\end{equation}
For $(\frac{2}{3},1-\delta]$, we define $\bar{m}(\alpha;r)=\bar{m}(\alpha;1-r)$.
\end{definition}
Since the test functions $\tilde{m}(r)$ and $\bar{m}(r)$ are employed to describe the later chosen function $m(r)$, we only define them on some essential intervals. And based on the definition of functions $\tilde{m}(\alpha,\beta;r)$ and $\bar{m}(\alpha;r)$, we construct the following two sets.
\begin{definition}\label{e1}

(1). (The set $S_{\mathcal{DD}}$)
We say $m(r)\in S_{\mathcal{DD}}$ if $m(r)\in C^1([0,1])$, vanishes on $[\frac{1}{3},\frac{2}{3}]$, and there exist constants $\alpha$ and $\beta$ satisfying $\frac{1}{6}<\alpha< \beta<1$, such that for any $r\in [\delta,\frac{1}{3})\cup(\frac{2}{3},1-\delta]$, $m(r)\ge \tilde{m}(\alpha,\beta;r)$.\par
(2). (The set $S_{\mathcal{NN}}$)
We say $m(r)\in S_{\mathcal{NN}}$ if $m(r)\in C^1([0,1])$, vanishes on $[\frac{1}{3},\frac{2}{3}]$, and there exist constant $\alpha$ satisfying $\frac{1}{6}<\alpha$, such that for $r\in[\delta,\frac{1}{3})\cup(\frac{2}{3},1-\delta]$, $m(r)\leq  \bar{m}(\alpha;r)$.
\end{definition}

\begin{comment}
\begin{definition}[The set $S_{\mathcal{NN}}$]\label{e2}
% $m(x)\in S_{\mathcal{NN}}$ if $m(x)$ satisfies one of the following conditions:\par
For all constant  $\delta\in[0,a)$ and the constants $\alpha$,  $h$ and  $\nu$ satisfying  $0<h<\alpha<1<\nu$. We consider the points
$Y_0=\delta$, for $n\ge 1$,
$X_n=\delta+2\sum_{i=1}^n \alpha^i-\alpha^n$, and
$Y_n=\delta+2\sum_{i=1}^n \alpha^i$
with $2\sum_{i=1}^{+\infty} \alpha^i=a-\delta$, and we denote $\bar{m}(x)$ as $x\in [0,a]$ to be
\begin{equation}\label{barm}
\bar{m}(x) = \left\{ \begin{array}{lcl}
h^{n}&& x\in[Y_{n-1},X_n]\\
-\nu h^{n}&&x\in[X_n,Y_n],
 \end{array} \right.
\end{equation}
for each $n\in\mathbb{N}$.
On $[b,1-\delta]$,  define $\bar{m}(x)=\bar{m}(a+b-x)$ on $[b,1-\delta]$. We define $m(x)\in S_{\mathcal{NN}}$ if  $m(x)\in C^1([0,1])$ vanishes on $[a,b]$ and $m(x)<\bar{m}(x)$ on $[\delta,a]\cup[b,1-\delta]$
%\par
%(2) $m(x)\in S^0_{\mathcal{NN}}$ if  there exists $\delta>0$ such that $m(x)$ possesses finite oscillations on $[0,a-\delta]$ while $m(x)$ possesses infinite countable zeros $\{W_n\}_{n=1}^{+\infty}$  on  $[a-\delta,a]$ with $W_n\to a^-$ as $n\to+\infty$ and $m(x)>0$ on $(W_n, W_{n+1})$, on $[a,b]$,  $m(x)=0$  and on $[b,1]$, $m(x)=m(a+b-x)$.
\end{definition}

\red{
  \begin{remark}
We note that $S_{\mathcal{DD}}$ and  $S_{\mathcal{NN}}$ are non-empty.
For example, for Definition \ref{e1}, there exists $\tilde{m}[h,\alpha,\beta,\nu](x)$ that satisfies the aforementioned conditions: take the constants $\delta=0$, $a=\frac{9}{20}$, $b=\frac{11}{20}$ and the constants $m=\frac{1}{7}$,  $\alpha=\frac{1}{6}$, $\beta=\frac{1}{5}$,  $\nu=2$.
For $\bar{m}[h,\alpha,\nu](x)$,
we take the constants $\delta=0$, $a=\frac{2}{5}$, $b=\frac{3}{5}$, $\alpha=\frac{1}{6}$, $h=\frac{1}{7}$ and $\nu=2$.
\end{remark}
}\end{comment}

\section{Limit of the principal eigenvalue  for $m(r)\in S_{\mathcal{DD}}$}

\begin{theorem}\label{dirichlet}
 Suppose that (H1) holds and $m(r)\in S_{\mathcal{DD}}$,  then  the principal eigenvalue $\lambda(s,m(r))$ of \eqref{eq} satisfies
 $$\lim_{s\to+\infty}\lambda(s,m(r))=\lambda^{\mathcal{DD}}$$
\end{theorem}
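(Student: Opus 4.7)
My plan separates Theorem \ref{dirichlet} into an upper bound and a matching lower bound. For the upper bound I would plug into the Rayleigh characterisation \eqref{voe} the test function $\Phi(r)=\varphi_{\mathcal{DD}}(r)\mathbf{1}_{[1/3,2/3]}(r)$, where $\varphi_{\mathcal{DD}}$ is the Dirichlet principal eigenfunction on $(1/3,2/3)$ normalised so that $\int_{1/3}^{2/3}r^{d-1}\varphi_{\mathcal{DD}}^{2}\,dr=1$. Since $\varphi_{\mathcal{DD}}$ vanishes at both endpoints this extension lies in $H^{1}([0,1])$, and because $m\equiv 0$ on $[1/3,2/3]$ the weight $e^{2sm(r)}$ equals $1$ throughout $\mathrm{supp}\,\Phi$; the Neumann Rayleigh quotient evaluated at $\Phi$ therefore collapses onto the Dirichlet one and equals $\lambda^{\mathcal{DD}}$, giving $\lambda(s,m(r))\le\lambda^{\mathcal{DD}}$ for every $s>0$.

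For $\liminf_{s\to\infty}\lambda(s,m(r))\ge\lambda^{\mathcal{DD}}$ I would apply Lemma \ref{converge} to the eigenfunction $\varphi(s;\cdot)$, or (to guarantee nontriviality in the limit) to its rescaling by $\|\varphi(s;\cdot)\|_{L^{\infty}([1/3,2/3])}$, which still solves \eqref{eq} with the same eigenvalue and is $L^{\infty}$-normalised to $1$ on $[1/3,2/3]$. Along a subsequence $s_{i}\to\infty$ one obtains a $C^{1}_{loc}((1/3,2/3))$-limit $\varphi_{*}\ge 0$, $\varphi_{*}\not\equiv 0$, satisfying
\[
-\varphi_{*}''(r)-\frac{d-1}{r}\varphi_{*}'(r)+c(r)\varphi_{*}(r)=\lambda_{*}\varphi_{*}(r),\qquad r\in(1/3,2/3),
\]
with $\lambda_{*}=\liminf_{s\to\infty}\lambda(s,m(r))$. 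If I can further show that $\varphi_{*}(1/3)=\varphi_{*}(2/3)=0$, then $\varphi_{*}$ is (up to a positive multiple) the Dirichlet principal eigenfunction on $(1/3,2/3)$, and simplicity of that eigenvalue forces $\lambda_{*}=\lambda^{\mathcal{DD}}$, matching the upper bound.

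The genuine obstacle is therefore the boundary-vanishing step, which is where both the oscillating lower bound $m\ge\tilde m(\alpha,\beta;\cdot)$ and hypothesis (H1) must be used together. On the $n$-th bump $[y_{n},x_{n+1})\subset[\delta,1/3)$ the definition of $\tilde m$ gives $m\ge 2(1/6)^{n}$, so the normalisation $\int_{0}^{1}r^{d-1}e^{2sm}\varphi^{2}\,dr=1$ forces
\[
\int_{y_{n}}^{x_{n+1}}r^{d-1}\varphi(s;r)^{2}\,dr\le e^{-4s(1/6)^{n}},
\]
and hence a point $r_{n}(s)\in[y_{n},x_{n+1})$ at which $\varphi(s;r_{n}(s))$ is exponentially small. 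To push this pointwise smallness all the way to $r=1/3$ I would work with the self-adjoint form $(r^{d-1}e^{2sm}\varphi')'=r^{d-1}e^{2sm}(c-\lambda)\varphi$, using that (H1) combined with the upper bound supplies a uniform lower bound $c-\lambda\ge\eta>0$ on $[0,1/3]$; an integration-by-parts comparison across the outer layer, with $n=n(s)\to\infty$ chosen to balance the exponential decay on bumps against the possible growth of $\varphi'$ across the intervening valleys, should then force $\varphi(s;1/3)\to 0$. Making this propagation quantitative — carrying exponential decay on a single bump uniformly to the endpoint — is the place where I expect the proof to require the most care; the argument at $r=2/3$ then follows from the symmetry $m(r)=m(1-r)$.
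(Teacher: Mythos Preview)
Your upper bound matches the paper's Lemma \ref{upd}. The lower-bound strategy is also the paper's---show $\varphi_*$ inherits Dirichlet boundary values and identify $\lambda_*=\lambda^{\mathcal{DD}}$---but both places you flag as delicate are genuine gaps.

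The ODE argument you propose for $\varphi_*(1/3)=0$ points the wrong way. Your observation $c-\lambda\ge\eta>0$ on $[0,1/3]$, combined with the self-adjoint form $(r^{d-1}e^{2sm}\varphi')'=r^{d-1}e^{2sm}(c-\lambda)\varphi$ and the Neumann condition at $0$, shows that $r^{d-1}e^{2sm}\varphi'$ is increasing from $0$, hence $\varphi'>0$ throughout $(0,1/3]$. Thus $\varphi$ is \emph{increasing} on $[0,1/3]$, and smallness at an interior point $r_n(s)<1/3$ cannot be propagated forward to $1/3$; it only controls $\varphi$ on $[0,r_n(s)]$. The paper's Lemma \ref{id} avoids any pointwise ODE comparison. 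Arguing by contradiction from $\varphi_*(1/3)>0$, it exploits the energy bound $\int_0^1 r^{d-1}e^{2sm}(|\varphi'|^2+c\varphi^2)\le c^*$: on each valley $[x_n,y_n]$ (where $\tilde m=-(1/6)^n$) the gradient term dominates $e^{-2s(1/6)^n}|\varphi(y_n)-\varphi(x_n)|^2/\alpha^{n+1}$, while on each bump $[y_n,x_{n+1}]$ (where $\tilde m=2(1/6)^n$) the potential term dominates a constant times $e^{4s(1/6)^n}\beta^{n+1}$. Pairing these via AM--GM cancels the $s$-dependence and leaves the factor $(\beta/\alpha)^{(n+1)/2}$, which diverges precisely because of the structural requirement $\beta>\alpha$ in Definition \ref{DM}(1); a telescoping estimate then shows the sum $\sum_{n\ge N(s_i)}|\varphi(y_n)-\varphi(x_n)|$ stays bounded below, forcing the energy to blow up. The hypothesis $\beta>\alpha$ is essential here and has no counterpart in your sketch.

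Your renormalisation by $\|\varphi(s;\cdot)\|_{L^\infty([1/3,2/3])}$ is also problematic: the bump estimate $\int_{y_n}^{x_{n+1}}r^{d-1}\varphi^2\le e^{-4s(1/6)^n}$ uses the weighted $L^2$ normalisation, which rescaling destroys, so you cannot simultaneously guarantee a nontrivial limit and run the boundary argument. The paper keeps the original normalisation and instead accounts for the possibility that mass of $w^2(s_i;\cdot)$ escapes $[1/3,2/3]$ through the weak-limit probability measure $\mu$ of \eqref{weakc}. Lemma \ref{lowerd} splits the Rayleigh quotient into outer and inner parts, bounds the inner part below by $\lambda^{\mathcal{DD}}\mu([1/3,2/3])$ using the Dirichlet condition from Lemma \ref{id}, and bounds the outer part below by $(\min_{[0,1/3]\cup[2/3,1]}c)\cdot\mu(\text{outer})$; (H1) then guarantees both coefficients are at least $\lambda^{\mathcal{DD}}$, so the total is $\ge\lambda^{\mathcal{DD}}$ no matter where $\mu$ sits. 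This measure-theoretic splitting, not a renormalisation, is how the paper handles the case $\varphi_*\equiv 0$.
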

We  prove it by estimating the upper and lower limits of  $\lambda(s,m(r))$, respectively. We write $\lambda(s)$ for some fixed $m(r)$ for convenience in this and next section.

\begin{lemma}[Upper bound]\label{upd}
Under conditions in Theorem \ref{dirichlet}, we have
 $$\limsup_{s\to+\infty}\lambda(s)\le \lambda^{\mathcal{DD}}.$$
\end{lemma}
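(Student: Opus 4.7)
The plan is to prove the upper bound by exhibiting an explicit test function in the variational characterization \eqref{voe}, namely a Dirichlet eigenfunction on the middle interval extended trivially to all of $[0,1]$.

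First I would introduce $\psi$ as the (positive) principal Dirichlet eigenfunction on $[\frac{1}{3},\frac{2}{3}]$ associated with $\lambda^{\mathcal{DD}}$, so
\begin{equation*}
-\psi''(r) - \frac{d-1}{r}\psi'(r) + c(r)\psi(r) = \lambda^{\mathcal{DD}}\psi(r),\quad r\in \left(\tfrac{1}{3},\tfrac{2}{3}\right),
\qquad \psi\left(\tfrac{1}{3}\right)=\psi\left(\tfrac{2}{3}\right)=0.
\end{equation*}
Then I would extend $\psi$ by zero to the whole interval, setting $\widetilde{\psi}(r)=0$ for $r\in[0,\frac{1}{3}]\cup[\frac{2}{3},1]$. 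Because $\psi$ vanishes at both endpoints, $\widetilde{\psi}$ is continuous on $[0,1]$ and its weak derivative (equal to $\psi'$ on $(\frac{1}{3},\frac{2}{3})$ and $0$ elsewhere) lies in $L^2([0,1])$, so $\widetilde{\psi}\in H^1([0,1])$ is an admissible test function in \eqref{voe}.

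Next, plugging $\widetilde{\psi}$ into the Rayleigh quotient from \eqref{voe}, I would use the two key facts that $m(r)\equiv 0$ on $[\frac{1}{3},\frac{2}{3}]$ (built into the definition of $S_{\mathcal{DD}}$) and that $\widetilde{\psi}$ is supported in this same interval. The exponential weight $e^{2sm(r)}$ therefore equals $1$ on the support of $\widetilde{\psi}$, and the quotient collapses, uniformly in $s$, to
\begin{equation*}
\frac{\int_{1/3}^{2/3} r^{d-1}\bigl(|\psi'(r)|^2 + c(r)\psi^2(r)\bigr)\,dr}{\int_{1/3}^{2/3} r^{d-1}\psi^2(r)\,dr}.
\end{equation*}
Multiplying the Dirichlet equation by $r^{d-1}\psi$ and integrating by parts (the boundary terms vanish thanks to $\psi(\frac{1}{3})=\psi(\frac{2}{3})=0$) identifies this ratio with $\lambda^{\mathcal{DD}}$. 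Hence $\lambda(s)\le \lambda^{\mathcal{DD}}$ for every $s>0$, and passing to $\limsup$ as $s\to+\infty$ yields the claim.

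I do not foresee any substantive obstacle in this step: the argument is a one-line test function construction. Notably, hypothesis (H1) and the fine oscillation structure of $m(r)\in S_{\mathcal{DD}}$ on $[\delta,\frac{1}{3})\cup(\frac{2}{3},1-\delta]$ play no role here, because the support of $\widetilde{\psi}$ never enters the oscillation region; those ingredients will be needed only for the matching lower bound.
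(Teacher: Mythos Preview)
Your proposal is correct and follows essentially the same approach as the paper: both extend the Dirichlet principal eigenfunction on $[\tfrac{1}{3},\tfrac{2}{3}]$ by zero, plug it into the variational characterization, and use $m\equiv 0$ on the middle interval so the Rayleigh quotient equals $\lambda^{\mathcal{DD}}$ uniformly in $s$. Your write-up is in fact a bit more careful about why the extension lies in $H^1$ and why the ratio equals $\lambda^{\mathcal{DD}}$, but the argument is the same.
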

\begin{proof}
We construct a test function $\varphi_1(r)$ with $\varphi_1(r)=0$ on $[0,\frac{1}{3})\cup(\frac{2}{3},1]$ and $\varphi_1(r)=\varphi_{\mathcal{DD}}(r)$ on $[\frac{1}{3},\frac{2}{3}]$, where $\varphi_{\mathcal{DD}}(r)$ is the principal eigenfunction to the equation
 \begin{eqnarray*}
 \left\{
 \begin{array}{l}
 -\varphi''(r)-\frac{d-1}{r}\varphi'(r)+c(r)\varphi(r)=\lambda\varphi(r),\quad \frac{1}{3}<r<\frac{2}{3},\\
 \varphi(\frac{1}{3})=\varphi(\frac{2}{3})=0.
 \end{array}\right.
\end{eqnarray*}
 By the variational characterization of $\lambda(s)$ and $\lambda^{\mathcal{DD}}$, noting that $m(r)=0$ on $[\frac{1}{3},\frac{2}{3}]$, we have
 \begin{eqnarray*}
 \lambda(s)\le\frac{\int_0^1 r^{d-1} e^{2sm(r)}\left(|\varphi_1'|^2+c(r)|\varphi_1|^2\right)dr}{\int_0^1 r^{d-1} e^{2sm(r)}|\varphi_1|^2dr}=
 %&=&\frac{\int_a^b e^{2sm(x)}\left(|\varphi_1'|^2+c(x)|\varphi_1|^2\right)}{\int_a^b e^{2sm(x)}|\varphi_1|^2}\nonumber\\
 \frac{\int_{\frac{1}{3}}^{\frac{2}{3}} r^{d-1} \left(|\varphi_1'|^2+c(r)|\varphi_1|^2\right)dr}{\int_{\frac{1}{3}}^{\frac{2}{3}} r^{d-1} |\varphi_1|^2dr}=
 \lambda^{\mathcal{DD}}.
 \end{eqnarray*}
Taking the upper limit of $s$ on both sides, we finish the proof.
\end{proof}

%Since the boundary condition of $\varphi_*(r)$, as  $r\in[\frac{1}{3},\frac{2}{3}]$ in Lemma \ref{converge} plays a crucial role in  driving the lower bound of $\lambda(s)$, we first establish the boundary condition of $\varphi_*(r)$ in the following lemma, that is, $\varphi_*(\frac{1}{3})=\varphi_*(\frac{2}{3})=0 $.\par
Before giving the lower bound, we recall that in Lemma \ref{converge}, there exists $\varphi_*(r)\in C_{loc}^1([\frac{1}{3},\frac{2}{3}])$ that satisfies
\begin{equation}\label{r}
-\varphi_*''(r)-\frac{d-1}{r}\varphi_*'(r)+c(r)\varphi_*(r)=\lambda_*\varphi_*(r)\quad r\in(\frac{1}{3},\frac{2}{3}),
\end{equation}
in weak sense. To give an explicit $\varphi_*(r)$, we
are needed to determine the boundary conditions of $\varphi_*(r)$ in the following lemma, that is, $\varphi_*(\frac{1}{3})=\varphi_*(\frac{2}{3})=0 $.

%there exists $\varphi_*(r)\in C^1([a,b])$ that satisfies
%\begin{equation}\label{r}
%-\varphi_*(r)''-\frac{d-1}{r}\varphi_*'(r)+c(r)\varphi_*(r)=\lambda_*\varphi_*(r)\quad r\in(a,b),
%\end{equation}
%in weak sense. To give an explicit $\varphi_*(r)$, we
%are needed to determine the boundary conditions at points $r=a$ and $r=b$.
%The key to determine the boundary conditions of \eqref{r} is  the following inequality
\begin{lemma}\label{id}
If $m(r)\in S_{\mathcal{DD}}$, then the function $\varphi_*(r)$ derived from Lemma \ref{converge}, satisfies \eqref{v*} in the weak sense, subject to Dirichlet boundary condition, that is,
\begin{eqnarray}
 \left\{
 \begin{array}{l}
 -\varphi_*''(r)-\frac{d-1}{r}\varphi_*'(r)+c(r)\varphi_*(r)=\lambda_*\varphi_*(r),\quad \frac{1}{3}<r<\frac{2}{3},\\
 \varphi_*(\frac{1}{3})=\varphi_*(\frac{2}{3})=0.
 \end{array}\right.
\end{eqnarray}
\end{lemma}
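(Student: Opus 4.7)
The plan is to use the infinitely many bumps of $\tilde m$ accumulating at $r=\frac{1}{3}$ (and, by the symmetry $m(r)=m(1-r)$, the symmetric ones at $r=\frac{2}{3}$) to force $\varphi_*$ to vanish at the endpoints. On each bump $I_n=[y_n,x_{n+1}]$ of length $\beta^{n+1}$, the hypothesis $m\ge\tilde m(\alpha,\beta;\cdot)$ gives $m\ge 2(1/6)^n$, so the weight $e^{2sm}$ is huge there and $\varphi(s;\cdot)$ is forced to be pointwise tiny. I treat only $r=\frac{1}{3}$; the case $r=\frac{2}{3}$ is identical by the symmetry of the problem.

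The first step is a bump estimate. The normalized principal eigenfunction satisfies $\int_0^1 r^{d-1}e^{2sm}(|\varphi'|^2+c\varphi^2)\,dr=\lambda(s)\le c^*$ by \eqref{voe}. Restricting to $I_n$ and using $r\ge\delta$, $c\ge c_*>0$ and $m\ge 2(1/6)^n$ yields
\[
\int_{I_n}\bigl(|\varphi'(s;r)|^2+\varphi^2(s;r)\bigr)\,dr\le C\,e^{-4s(1/6)^n}.
\]
Combining with $|I_n|=\beta^{n+1}$ via the standard 1D trick (pick $r_n^*\in I_n$ with $\varphi^2(r_n^*)\le|I_n|^{-1}\int_{I_n}\varphi^2$ and use $|\varphi(r)-\varphi(r_n^*)|^2\le|I_n|\int_{I_n}|\varphi'|^2$) gives
\[
\max_{r\in I_n}|\varphi(s;r)|\le C\,\beta^{-(n+1)/2}\,e^{-2s(1/6)^n}.
\]

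The second step is a diagonal argument on the sequence $s_i\to+\infty$ of Lemma \ref{converge}. Take $n_i=\lfloor\log_6(s_i/(K\log s_i))\rfloor$ with $K$ to be fixed below; then $s_i(1/6)^{n_i}\asymp K\log s_i\to+\infty$ and $x_{n_i+1}\to\frac{1}{3}^-$, and the bump estimate gives $\varphi(s_i;x_{n_i+1})\lesssim s_i^{|\log\beta|/(2\log 6)-2K}\to 0$ once $K>|\log\beta|/(4\log 6)$. On the residual interval $[x_{n_i+1},\frac{1}{3}]$ only the weak bound $m\ge-(1/6)^{n_i+1}$ is available, so the energy identity gives $\int_{x_{n_i+1}}^{1/3}|\varphi'(s_i;r)|^2\,dr\le C\,e^{2s_i(1/6)^{n_i+1}}\lesssim s_i^{K/3}$; together with the tail estimate $\frac{1}{3}-x_{n_i+1}\le C\beta^{n_i+1}$, Cauchy--Schwarz yields
\[
\bigl|\varphi(s_i;\tfrac{1}{3})-\varphi(s_i;x_{n_i+1})\bigr|\lesssim s_i^{K/6-|\log\beta|/(2\log 6)}\to 0
\]
provided $K<3|\log\beta|/\log 6$. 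The two windows in $K$ overlap, so $\varphi(s_i;\frac{1}{3})\to 0$, and the $C^1_{loc}([\frac{1}{3},\frac{2}{3}])$ convergence of Lemma \ref{converge} then forces $\varphi_*(\frac{1}{3})=0$. The main obstacle is exactly this diagonal balance: the bump $I_{n_i}$ must be deep enough in units of $s_i$ for the exponential suppression $e^{-2s_i(1/6)^{n_i}}$ to defeat the algebraic blow-up $\beta^{-(n_i+1)/2}$ coming from the shrinking bump width, yet shallow enough that the weakly weighted energy on $[x_{n_i+1},\frac{1}{3}]$ does not swamp the Cauchy--Schwarz gain from $\frac{1}{3}-x_{n_i+1}\to 0$.
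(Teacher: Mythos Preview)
Your approach is correct in spirit and genuinely different from the paper's. You argue directly: a single bump $I_{n_i}$ at a diagonally chosen depth forces $\varphi(s_i;x_{n_i+1})$ to be tiny, and a Cauchy--Schwarz estimate over the short residual interval $[x_{n_i+1},\tfrac13]$ propagates this to $\varphi(s_i;\tfrac13)$. The paper instead argues by contradiction: assuming $\varphi_*(\tfrac13)>0$, it introduces $N(s_i)$, the first index beyond which $\varphi(s_i;\cdot)\ge\tfrac12\varphi_*(\tfrac13)$ on every $[y_n,x_{n+1}]$, shows $N(s_i)\to\infty$, and then combines the energy over \emph{all} bumps $n\ge N(s_i)$ to obtain an inequality of the form
\[
c^*\;\ge\;C\,\varphi_*(\tfrac13)\,\Bigl(\tfrac{\beta}{\alpha}\Bigr)^{(N(s_i)+1)/2}\sum_{n\ge N(s_i)}|\varphi(s_i;y_n)-\varphi(s_i;x_n)|,
\]
with $(\beta/\alpha)^{(N(s_i)+1)/2}\to\infty$ and the sum bounded below by a telescoping argument. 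Thus the paper's route relies essentially on the hypothesis $\alpha<\beta$; yours does not, and is both shorter and more elementary.

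There is one arithmetic slip to fix. With $n_i=\lfloor\log_6(s_i/(K\log s_i))\rfloor$ one only gets $s_i(1/6)^{n_i}\in[K\log s_i,\,6K\log s_i)$, hence $s_i(1/6)^{n_i+1}\in[\tfrac{K}{6}\log s_i,\,K\log s_i)$, and the correct upper bound is $e^{2s_i(1/6)^{n_i+1}}\le s_i^{2K}$, not $s_i^{K/3}$. Carrying this through, the residual estimate reads $|\varphi(s_i;\tfrac13)-\varphi(s_i;x_{n_i+1})|\lesssim s_i^{\,K-|\log\beta|/(2\log 6)}(\log s_i)^{C}$, so the second constraint should be $K<|\log\beta|/(2\log 6)$ rather than $K<3|\log\beta|/\log 6$. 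The window $\bigl(\tfrac{|\log\beta|}{4\log 6},\,\tfrac{|\log\beta|}{2\log 6}\bigr)$ is still non-empty, so your argument goes through after this correction. One last remark: to pass from $\varphi(s_i;\tfrac13)\to 0$ to $\varphi_*(\tfrac13)=0$ you need convergence \emph{up to the endpoint}, which is the $C([\tfrac13,\tfrac23])$ convergence \eqref{m} established in the proof of Lemma~\ref{converge}; the $C^1_{\mathrm{loc}}$ statement alone would not give this.
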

\begin{proof}
It suffices to prove $\varphi_*(\frac{1}{3})=\varphi_*(\frac{2}{3})=0$, based on Lemma \ref{converge}.
%Assume in the following $\varphi(s_{i};x)$ is the principal eigenfunction corresponding to  $\lambda_1^{\mathcal{N}}(s_{i})$.
If $\varphi_*(\frac{1}{3})\ne 0$, due to the non-negativity of $\varphi_*(r)$ on $(\frac{1}{3},\frac{2}{3})$, we have $\varphi_*(\frac{1}{3})>0$. Then we can define
$$
N(s_{i})=\inf\left\{l\in \mathbb{N}\bigg|\varphi(s_{i};r)>\frac{1}{2}\varphi_*(\frac{1}{3}),\ r\in[y_n,x_{n+1}],\ \text{for all }\ n\ge l\right\},
$$
where the sequence  $\{s_i \}_{i=1}^{+\infty}$ is the one obtained in Lemma \ref{converge}, and  $y_n$ and $x_{n+1}$ are given in Definition \ref{e1}.
It is easy to see $N(s_{i})$ is well defined. Actually, we have
\begin{equation}\label{N}
N(s_{i})\to+\infty,\ \text{as } i\to+\infty.
\end{equation}
Recall that $m(r)\in S_{\mathcal{DD}}$ implies that there exists a function $\tilde{m}(\alpha,\beta;r)$ such that $m(r)\ge\tilde{m}(r)$ on $[\delta,\frac{1}{3})$. If $N(s_{i})\le N^*$ for some positive constant $N^*$ for all large $i$, we obtain
\begin{eqnarray*}
 \int_{y_{N(s_{i})}}^{x_{N(s_{i})+1}}r^{d-1}e^{2s_{i}m} (|\varphi'|^2+c|\varphi|^2)dr
\ge \delta^{d-1}\int_{y_{N(s_{i})}}^{x_{N(s_{i})+1}}e^{2s_{i}\tilde{m}} c|\varphi|^2dr
\ge
 \frac{1}{4}c_*\delta^{d-1}\varphi_*^2(\frac{1}{3})\beta^{N^*+1}e^{4 s_{i} (\frac{1}{6})^{N^*}},
\end{eqnarray*}
which tends to infinity as $i\to+\infty$. % where $\beta$, $k$ and $h$ are constants defined in Definition \ref{e1}.
However, it is impossible according to the following inequality,
\begin{equation}\label{key}
c^*\ge \int_\delta^{\frac{1}{3}} r^{d-1}e^{2sm}(|\varphi'|^2+c|\varphi|^2)dr\ge  \int_{y_{N(s_{i})}}^{x_{N(s_{i})+1}}r^{d-1}e^{2s_{i}m} (|\varphi'|^2+c|\varphi|^2)dr,\quad\forall\ s>0.
\end{equation}
So \eqref{N} holds true.\par

%Since $\varphi(s_{i};a)\to\varphi_*(a)$ as $i\to+\infty$,  then $\varphi(s_{i};a)>\varphi_*(a)\mbox{/}2$ for sufficiently large $i$.
%  and $m(x)\ge\tilde{m}(x)$ on $[\delta,a]$ and then obtain
\par
From inequality \eqref{key}, we have
\begin{eqnarray}\label{DDE}
c^*&\ge&%=\int_0^1e^{2s_{i_j}m}(|\varphi'|^2+c|\varphi|^2)dx\nonumber\\
\int_{\delta}^{\frac{1}{3}} r^{d-1}e^{2s_{i}m}(|\varphi'|^2+c|\varphi|^2)dr\ge \delta^{d-1}\sum_{n=N(s_{i})}^{+\infty}\left(\int_{x_n}^{y_n}e^{2s_{i}\tilde{m}}|\varphi'|^2dr+c_*\int_{y_n}^{x_{n+1}}e^{2s_{i}\tilde{m}}|\varphi|^2dr\right)\nonumber\\
&=&\delta^{d-1}\sum_{n=N(s_{i})}^{+\infty}\left(e^{-2s_{i}(\frac{1}{6})^n}\int_{x_n}^{y_n}|\varphi'|^2dx
+c_*e^{4s_{i}(\frac{1}{6})^n}\int_{y_n}^{x_{n+1}}|\varphi|^2dx\right)\nonumber\\
&\ge&\delta^{d-1}\sum_{n=N(s_{i})}^{+\infty}\left(e^{-2s_{i}(\frac{1}{6})^n}\frac{|\varphi(s_{i};y_n)-\varphi(s_{i};x_n)|^2}{\alpha^{n+1}}
+\frac{1}{4}\varphi_*^2(\frac{1}{3}) c_*e^{4s_{i}(\frac{1}{6})^n}\beta^{n+1}\right)\nonumber\\
&\ge&\delta^{d-1}\sqrt{c_*}\varphi_*(\frac{1}{3})\inf_{n\ge N(s_i)}\left(\frac{\beta}{\alpha}\right)^{\frac{n+1}{2}}\sum_{n=N(s_{i})}^{+\infty}|\varphi(s_{i};y_n)-\varphi(s_{i};x_n)|.
%\ge \frac{\sqrt{c_*}\varphi^2_*(a)}{4}\inf_{n\ge N(s_i)}\left(\frac{\beta}{\alpha}\right)^{\frac{n+1}{2}}\to+\infty,
\end{eqnarray}
% as $i\to+\infty$, according to Lemma \ref{e3} and the fact $\beta>\alpha$. However, by the inequality \eqref{ubd}, we obtain the
% contradiction.\par
In the third inequality, we have utilized the  minimizer of the functional
 \begin{align*}
 \phi''(r)=0,\quad r\in (x_n,y_n); \quad \phi(x_n)=\varphi(s_{i};x_n),\quad\phi(y_n)=\varphi(s_{i};y_n)\nonumber
 \end{align*}
which corresponds to a line segment connecting the points $(x_n,\varphi(s_{i};x_n))$ and $(y_n,\varphi(s_{i};y_n))$. Additional, we have employed the fact that $\varphi(s_i;x)\ge\frac{1}{2}\varphi_*(\frac{1}{3})$ for $n\ge N(s_i)$, and the final inequality follows from the fundamental inequality. \par

Next we will estimate the term $\sum_{n=N(s_{i})}^{+\infty}|\varphi(s_{i};y_n)-\varphi(s_{i};x_n)|$ in \eqref{DDE}.
We note that
there exists $Z_{N(s_{i})-1}\in[y_{N(s_{i})-1},x_{N(s_{i})})$, such that $\varphi(Z_{N(s_{i})-1})\le\varphi_*(\frac{1}{3})\mbox{/}2$ by the definition of $N(s_{i})$. Redefine $\tilde{y}_{N(s_{i})-1}=Z_{N(s_{i})-1}$ and $\tilde{y}_{n}=y_{n}$, for each $n\ge N(s_{i})$. After these modification, we have
\begin{eqnarray}\label{DDD}
&&\sum_{n=N(s_{i})}^{+\infty}|\varphi(s_{i};y_n)-\varphi(s_{i};x_n)|%=\sum_{n=N(s_{i})}^{+\infty}|\varphi(\tilde{y}_n)-\varphi(x_n)|\nonumber\\
\ge \sum_{n=N(s_{i})}^{+\infty}\left(|\varphi(s_{i};\tilde{y}_{n})-\varphi(s_{i};\tilde{y}_{n-1})|-|\varphi(s_{i};x_{n})-\varphi(s_{i};\tilde{y}_{n-1})|\right)\nonumber\\
&\ge&\left|\sum_{n=N(s_{i})}^{+\infty}\varphi(s_{i};\tilde{y}_{n})-\varphi(s_{i};\tilde{y}_{n-1})\right|-\sum_{n=N(s_{i})}^{+\infty}\sqrt{\frac{c^*\beta^{n}}{\delta^{d-1}}}%\nonumber\\
\ge\frac{\varphi_*(\frac{1}{3})}{2}-\sum_{n=N(s_{i})}^{+\infty}\sqrt{\frac{c^*\beta^{n}}{\delta^{d-1}}} ,
\end{eqnarray}
where the second inequality follows from
$$
c^*\ge  \int_{\tilde{y}_{n-1}}^{x_{n}}r^{d-1}e^{2s_{i}\tilde{m}(r)} (|\varphi'|^2+c|\varphi|^2)dr
\ge\delta^{d-1}\int_{\tilde{y}_{n-1}}^{x_{n}} |\varphi'|^2dr
\ge\frac{\delta^{d-1}|\varphi(s_{i};x_{n})-\varphi(s_{i};\tilde{y}_{n-1})|^2}{\beta^{n}}.
$$
Since $\beta\in(0,1)$, according to \eqref{N}, we can choose $i$ large enough such that $\sum_{n=N(s_{i})}^{+\infty}\sqrt{\frac{c^*\beta^{n}}{\delta^{d-1}}}\le \frac{1}{4}\varphi_*(\frac{1}{3})$. In other words, the right-hand side of \eqref{DDD} is strictly positive with positive $\varphi_*(\frac{1}{3})$ and large $i$. By substituting \eqref{DDD} into \eqref{DDE} and considering the fact that $\beta>\alpha$ (as defined by $\tilde{m}(r)$), the right-hand side of \eqref{DDE} tends to infinity as $i\to+\infty$, which results a contradiction. Thus, we conclude that $\varphi_*(\frac{1}{3})=0$, and $\varphi_*(\frac{2}{3})=0$ in light of the symmetric of $m(x)$.
 \end{proof}

We are now at the position to give the lower bound of $\lambda(s)$.
\begin{lemma}[Lower bound]\label{lowerd}
Under conditions in Theorem \ref{dirichlet}, we have
 $$
\liminf_{s\to+\infty}\lambda(s)\ge\lambda^{\mathcal{DD}}.
 $$
 \end{lemma}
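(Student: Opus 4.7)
The plan is to apply Lemma \ref{converge} and Lemma \ref{id} along a subsequence $\{s_i\}$ with $\lambda(s_i)\to\lambda_*:=\liminf_{s\to+\infty}\lambda(s)$, and then split the energy identity
\begin{equation*}
\lambda(s_i)=\int_0^1 r^{d-1}e^{2s_i m(r)}\left(|\varphi'(s_i;r)|^2+c(r)\varphi^2(s_i;r)\right)dr
\end{equation*}
into its contribution on the inner interval $(\tfrac13,\tfrac23)$ and on the outer set $[0,\tfrac13]\cup[\tfrac23,1]$. The normalization $\int_0^1 r^{d-1}e^{2s_im}\varphi^2\,dr=1$, together with the fact that $m\equiv 0$ on $[\tfrac13,\tfrac23]$, is what glues the two pieces together.

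On the outer set I would discard the nonnegative gradient term and apply hypothesis (H1), $c(r)>\lambda^{\mathcal{DD}}$, to obtain
\begin{equation*}
\int_{[0,1/3]\cup[2/3,1]} r^{d-1}e^{2s_i m}(|\varphi'|^2+c\varphi^2)\,dr\;\ge\;\lambda^{\mathcal{DD}}\!\left(1-\int_{1/3}^{2/3} r^{d-1}\varphi^2(s_i;r)\,dr\right).
\end{equation*}
On the inner interval the weight $e^{2s_i m}$ disappears, and the contribution is simply $\int_{1/3}^{2/3} r^{d-1}(|\varphi'(s_i;\cdot)|^2+c\varphi^2(s_i;\cdot))\,dr$. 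The uniform $H^1((\tfrac13,\tfrac23))$-bound already exhibited inside the proof of Lemma \ref{converge} allows me, along a further subsequence, to upgrade the $C^1_{\mathrm{loc}}$ convergence to weak $H^1$ and strong $C([\tfrac13,\tfrac23])$ convergence of $\varphi(s_i;\cdot)$ to $\varphi_*$. Weak lower semicontinuity of the Dirichlet form plus dominated convergence for the potential term then deliver
\begin{equation*}
\liminf_{i\to\infty}\int_{1/3}^{2/3} r^{d-1}\!\left(|\varphi'(s_i;\cdot)|^2+c\varphi^2(s_i;\cdot)\right)dr \;\ge\; \int_{1/3}^{2/3} r^{d-1}\!\left(|\varphi_*'|^2+c\varphi_*^2\right)dr.
\end{equation*}
By Lemma \ref{id}, $\varphi_*\in H_0^1((\tfrac13,\tfrac23))$, so the variational definition of $\lambda^{\mathcal{DD}}$ bounds the right-hand side below by $\lambda^{\mathcal{DD}}\int_{1/3}^{2/3} r^{d-1}\varphi_*^2\,dr$.

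Adding the two estimates, and using the convergence $\int_{1/3}^{2/3} r^{d-1}\varphi^2(s_i;\cdot)\,dr\to\int_{1/3}^{2/3} r^{d-1}\varphi_*^2\,dr$ from the uniform convergence on the closed interval, the $\int_{1/3}^{2/3} r^{d-1}\varphi_*^2\,dr$ terms in the two bounds cancel, yielding $\lambda_*\ge\lambda^{\mathcal{DD}}$; the degenerate case $\varphi_*\equiv 0$ requires no separate argument, since the outer bound alone then already gives $\lambda_*\ge\lambda^{\mathcal{DD}}$. The step most prone to slipping is this precise cancellation between the inner and outer bounds: without (H1) the outer piece would not produce the factor $\lambda^{\mathcal{DD}}$, and the two estimates would fail to combine. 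The remaining technical point, promoting $C^1_{\mathrm{loc}}$ convergence to weak $H^1((\tfrac13,\tfrac23))$ convergence so that the lower semicontinuity step is justified, is immediate from the uniform $H^1$ bound already present in the proof of Lemma \ref{converge} and hence routine.
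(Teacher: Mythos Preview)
Your argument is correct and considerably more direct than the paper's. The paper routes the outer contribution through the weak-$*$ limit measure $\mu$ of $w^2(s_i;\cdot)$ from \eqref{weakc}: it introduces cutoff functions $\zeta$ and $\xi$ with a small parameter $\epsilon$, obtains bounds of the form $\min_{[0,1/3]\cup[2/3,1]}c\cdot(\mu([0,\tfrac13+3\epsilon])+\mu([\tfrac23-3\epsilon,1]))-C\epsilon$ and $\lambda^{\mathcal{DD}}\mu([\tfrac13+2\epsilon,\tfrac23-2\epsilon])$, adds them, and only then sends $\epsilon\to0$. You bypass the measure $\mu$ entirely by exploiting the normalization $\int_0^1 r^{d-1}e^{2s_im}\varphi^2=1$ together with $m\equiv 0$ on $[\tfrac13,\tfrac23]$, so that the outer mass is exactly $1-\int_{1/3}^{2/3}r^{d-1}\varphi^2(s_i;\cdot)\,dr$; since this last integral converges (by the strong $C([\tfrac13,\tfrac23])$ convergence already established in \eqref{m}) to $\int_{1/3}^{2/3}r^{d-1}\varphi_*^2\,dr$, the cancellation with the inner bound is exact and no $\epsilon$-argument is needed. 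Your route also uses only the weak-$H^1$ lower semicontinuity of the Dirichlet form rather than the paper's renormalization via $v(s_i;r)$. The trade-off is that the paper's measure-theoretic framework is set up to handle situations where the inner convergence is only $C^1_{\mathrm{loc}}$ on the open interval and mass might concentrate at the endpoints, whereas here the one-dimensional compact embedding $H^1\hookrightarrow C$ on the closed interval makes your shortcut available.
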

 \begin{proof}
 %We take $\varphi(s;x)=e^{-sm(x)}w(s;x)$ as the corresponding eigenfunction to \eqref{eq}.
Consider the variational characterization of the principal eigenvalue in \eqref{voa} with the transformation $w(s_i;r)=r^{\frac{d-1}{2}}\varphi(s_i;r)$, where the sequence $\{s_i \}_{i=1}^{+\infty}$ is the one obtained in Lemma \ref{converge}. Recall that $m(r)=0$ on $[\frac{1}{3},\frac{2}{3}]$ and then we have the following lower bound estimate of the principal eigenvalue
\begin{eqnarray}
\lambda(s_{i})&=&\int_0^1\left(\left|w'(s_i;r)-sm'(r)w(s_i;r)-\frac{d-1}{2r}w(s_i;r)\right|^2+c(r)|w(s_{i};r)|^2\right)dr\nonumber\\
&\ge&\left(\int_0^{\frac{1}{3}}+\int_{\frac{2}{3}}^1\right)c(r)|w(s_{i};r)|^2dr
+\int_{\frac{1}{3}}^{\frac{2}{3}} r^{d-1}(|\varphi'(s_{i};r)|^2+c(r)|\varphi(s_{i};r)|^2)dr\nonumber\\
&\triangleq& (I_0^{\frac{1}{3}}(s_i)+I_{\frac{2}{3}}^1(s_i))+I_{\frac{1}{3}}^{\frac{2}{3}}(s_i),
\label{estd'}
\end{eqnarray}
where $I_0^{\frac{1}{3}}(s_i)=\int_0^{\frac{1}{3}}c(r)|w(s_{i};r)|^2dr$, $I_{\frac{2}{3}}^1(s_i)=\int_{\frac{2}{3}}^1c(r)|w(s_{i};r)|^2dr$ and $I_{\frac{1}{3}}^{\frac{2}{3}}(s_i)=\int_{\frac{1}{3}}^{\frac{2}{3}} r^{d-1} (|\varphi'(s_{i};r)|^2+c(r)|\varphi(s_{i};r)|^2)dr$.
\par
Next, we estimate the three terms $I_0^{\frac{1}{3}}(s_i)$, $I_{\frac{2}{3}}^1(s_i)$ and $I_{\frac{1}{3}}^{\frac{2}{3}}(s_i)$ respectively.\par
\textbf{Estimate of $I_0^{\frac{1}{3}}(s_i)$:} Let $\epsilon>0$ be a small constant. Define a continuous function $\zeta(r)$ on $[0,1]$ such that $\zeta(r)=c(r)$ for $r\in[0,{\frac{1}{3}}]$; $\zeta(r)=c({\frac{1}{3}})$ for $r\in({\frac{1}{3}}, {\frac{1}{3}}+3\epsilon]$; $\zeta(r)=-\frac{c({\frac{1}{3}})}{\epsilon}(r-{\frac{1}{3}}-4\epsilon)$ for $r\in({\frac{1}{3}}+3\epsilon,{\frac{1}{3}}+4\epsilon]$ and $\zeta(r)=0$ for else. From \eqref{weakc}, we obtain
\begin{eqnarray}
\lim_{i\to+\infty}I_0^{\frac{1}{3}}(s_i)
&\ge&\liminf_{i\to+\infty}\int_{0}^{1}w^2(s_{i};r)\zeta(r)dr%\nonumber\\
-\limsup_{i\to+\infty}\int_{{\frac{1}{3}}}^{{\frac{1}{3}}+4\epsilon}r^{d-1}\varphi^2(s_{i};r)\zeta(r)dr\nonumber\\
&=&\int_{[0,1]}\zeta(r)\mu(dr)-\limsup_{i\to+\infty}\int_{{\frac{1}{3}}}^{{\frac{1}{3}}+4\epsilon}r^{d-1}\varphi^2(s_{i};r)\zeta(r)dr\nonumber\\
&\ge& \int_{[0,{\frac{1}{3}}+3\epsilon]}\zeta(r)\mu(dr)-C\epsilon%\nonumber\\
\ge \min_{[0,{\frac{1}{3}}]}c(r)\mu([0,{\frac{1}{3}}+3\epsilon])-C\epsilon,\label{de'}
\end{eqnarray}
where the second inequality follows from the boundedness of $\varphi(s_i;r)$ in \eqref{wb}.\par
\textbf{Estimate of $I_{\frac{2}{3}}^1(s_i)$:} Similarly, we have
\begin{equation}
\lim_{i\to+\infty}I_{\frac{2}{3}}^1(s_i)
\ge \min_{[{\frac{2}{3}},1]}c(r)\mu([{\frac{2}{3}}-3\epsilon,1])-C\epsilon.\label{deb'}
\end{equation}\par

\textbf{Estimate of $I_{\frac{1}{3}}^{\frac{2}{3}}(s_i)$:}
%For reading convenience, we present the estimated results of $I_a^b(s_i)$ here.
%\begin{equation}\label{abddd}
%\lim_{i\to+\infty}I_a^b(s_i)
%\ge\mu([a+2\epsilon,b-2\epsilon])\lambda_1^{\mathcal{DD}}(a,b).
%\end{equation}\par
Let us begin by stating an estimate that will be used later. Define a continuous function $\xi(r)$ on $[0,1]$ such $\xi(r)=1$ for $r\in[{\frac{1}{3}}+2\epsilon,{\frac{2}{3}}-2\epsilon]$; $\xi(r)=0$ for $r\in[0,{\frac{1}{3}}+\epsilon]\cup[{\frac{2}{3}}-\epsilon,1]$ and $\xi(r)\in [0,1]$ is continuous for $r\in({\frac{1}{3}}+\epsilon,{\frac{1}{3}}+2\epsilon)\cup({\frac{2}{3}}-2\epsilon,{\frac{2}{3}}-\epsilon)$. Then, according to \eqref{weakc}, we obtain
\begin{eqnarray}\label{same}
&&\liminf_{{i\to+\infty}}\int_{{\frac{1}{3}}}^{{\frac{2}{3}}}r^{d-1}\varphi^2(s_{i};r)dr
=\liminf_{{i\to+\infty}}\int_{{\frac{1}{3}}}^{{\frac{2}{3}}}w^2(s_{i};r)dr%\nonumber\\
\ge\liminf_{i\to+\infty}\int_{{\frac{1}{3}}+\epsilon}^{{\frac{2}{3}}-\epsilon}w^2(s_{i};r)\xi(r)dr\nonumber\\
&=&\int_{[{\frac{1}{3}}+\epsilon,{\frac{2}{3}}-\epsilon]}\xi(r)\mu(dr)\ge\mu([{\frac{1}{3}}+2\epsilon,{\frac{2}{3}}-2\epsilon]).
\end{eqnarray}

%\textbf{Sub-step 3:} We are going to complete the proof of \eqref{abddd}.\par
Next we will use \eqref{same} to estimate $I_{\frac{1}{3}}^{\frac{2}{3}}(s_i)$. Without loss of generality, assume that  for large $i$,
\begin{equation}
\int_{\frac{1}{3}}^{\frac{2}{3}}r^{d-1}|\varphi(s_{i};r)|^2dr>0.\label{assdd}
\end{equation}
Then, we can denote that
$$
v(s_{i};r)=\frac{\varphi(s_{i};r)}{\sqrt{\int_{\frac{1}{3}}^{\frac{2}{3}}r^{d-1}|\varphi(s_{i};r)|^2dr}}.%=\frac{\varphi(s_{i_j};x)}{\sqrt{\int_a^b|w(s_{i_j};x)|^2dx}},
$$
It is easy to see that
$$
v(s_{i};r)\to \frac{\varphi_*(r)}{\sqrt{\int_{\frac{1}{3}}^{\frac{2}{3}}r^{d-1}|\varphi_*(r)|^2dr}}\triangleq v_1(r),\ \text{in }C_{loc}^1\left(\left[{\frac{1}{3}},{\frac{2}{3}}\right]\right)\text{  as }i\to+\infty,
$$
where $\varphi_*(r)$ is determined in Lemma \ref{id} and $\int_{\frac{1}{3}}^{\frac{2}{3}}r^{d-1}v_1^2(r)dr=1$. Since $\varphi_*({\frac{1}{3}})=\varphi_*({\frac{2}{3}})=0$, we know
 $v_1({\frac{1}{3}})=v_1({\frac{2}{3}})=0$ .\par
Now we can derive
\begin{eqnarray}
&&\lim_{i\to+\infty}I_{\frac{1}{3}}^{\frac{2}{3}}(s_i)%\nonumber\\
=\lim_{i\to+\infty}\int_{\frac{1}{3}}^{\frac{2}{3}}r^{d-1}|\varphi(s_{i};r)|^2dr\int_{\frac{1}{3}}^{\frac{2}{3}}r^{d-1}\left(|v'(s_{i};r)|^2+c(r)|v(s_{i};r)|^2\right)dr\nonumber\\
&\ge&\mu\left(\left[{\frac{1}{3}}+2\epsilon,{\frac{2}{3}}-2\epsilon\right]\right)\int_{\frac{1}{3}}^{\frac{2}{3}}r^{d-1}\left(|v_1'(r)|^2+c(r)|v_1(r)|^2\right)dr\nonumber\\
&\ge&\mu\left(\left[{\frac{1}{3}}+2\epsilon,{\frac{2}{3}}-2\epsilon\right]\right)\lambda^{\mathcal{DD}},\label{ddab}
\end{eqnarray}
by \eqref{same} and the variational characterization of $\lambda^{\mathcal{DD}}$.

In fact, if \eqref{assdd} does not holds true, then there exists a subsequence $\{s_{i}\}_{i=1}^{+\infty}$, labelled by itself for simplicity, satisfying
$$
\lim_{i\to+\infty}\int_{\frac{1}{3}}^{\frac{2}{3}}r^{d-1}\varphi^2(s_{i};r)dr=\lim_{i\to+\infty}\int_{\frac{1}{3}}^{\frac{2}{3}}w^2(s_{i};r)dr=0.
$$
Due to \eqref{weakc} and the fact
$$
\mu\left(\left({\frac{1}{3}},{\frac{2}{3}}\right)\right)=\sup\left\{\mu\left(\left[\tilde{{\frac{1}{3}}},\tilde{{\frac{2}{3}}}\right]\right):\ \left[\tilde{{\frac{1}{3}}},\tilde{{\frac{2}{3}}}\right]\subset \left({\frac{1}{3}},{\frac{2}{3}}\right)\right\},
$$
 we have $\mu(({\frac{1}{3}},{\frac{2}{3}}))=0$, which leads to
\begin{equation}\label{mud'}
\mu\left(\left[0,{\frac{1}{3}}\right]\right)+\mu\left(\left[{\frac{2}{3}},1\right]\right)=1.
\end{equation}
By using Lemma \ref{upd}, \eqref{estd'}, \eqref{de'}, \eqref{deb'}, \eqref{mud'} under the assumption (H1), we can deduce that
\begin{eqnarray*}
\lambda^{\mathcal{DD}}\ge\lambda^*\ge \min_{[0,{\frac{1}{3}}]\cup[{\frac{2}{3}},1]}c(r)\left(\mu\left(\left[0,{\frac{1}{3}}+3\epsilon\right]\right)+\mu\left(\left[{\frac{2}{3}}-3\epsilon,1\right]\right)\right)-C\epsilon>\lambda^{\mathcal{DD}},
\end{eqnarray*}
by selecting $\epsilon> 0$ to be sufficiently small. This leads to a contradiction.

\textbf{Estimate of $\lambda(s)$:}
Combining the estimated results \eqref{de'}, \eqref{deb'} and \eqref{ddab} in \eqref{estd'}, we get
\begin{eqnarray}
\lim_{i\to+\infty}\lambda(s)&\ge&\lim_{i\to+\infty}(I_0^{\frac{1}{3}}(s_i)+I_{\frac{1}{3}}^{\frac{2}{3}}(s_i)+I_{\frac{2}{3}}^1(s_i))\nonumber\\
&\ge&\min_{\left[0,{\frac{1}{3}}\right]\cup[{\frac{2}{3}},1]}c(r)\left(\mu\left(\left[0,{\frac{1}{3}}+3\epsilon\right]\right)+\mu\left(\left[{\frac{2}{3}}-3\epsilon,1\right]\right)\right)
+\mu\left(\left[{\frac{1}{3}}+2\epsilon,{\frac{2}{3}}-2\epsilon\right]\right)\lambda^{\mathcal{DD}}-C\epsilon\nonumber\\
&\ge&\lambda^{\mathcal{DD}}-C\epsilon\label{ld'}
%&\ge& \lambda_1^{\mathcal{DD}}(a,b),\quad \text{by letting }\epsilon\to 0,
\end{eqnarray}
where the last inequality holds by  (H1). Taking the limit as $\epsilon \to 0$ on the both sides of \eqref{ld'}, we conclude the proof.\par
\end{proof}
\textbf{Proof of Theorem \ref{dirichlet}:} Combining Lemma \ref{upd}, Lemma \ref{id} and Lemma \ref{lowerd}, we complete our proof.

\section{Limit of the principal eigenvalue for $m(x)\in S_{\mathcal{NN}}$}
 \begin{theorem}\label{neumann}
 Suppose that (H2) holds and  $m(x)\in S_{\mathcal{NN}}$, then   the principal eigenvalue $\lambda(s)$ of \eqref{eq}  satisfies
 $$\lim_{s\to+\infty}\lambda(s)=\lambda^{\mathcal{NN}}$$
\end{theorem}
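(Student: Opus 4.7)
The proof of Theorem \ref{neumann} proceeds in parallel with Theorem \ref{dirichlet}, establishing the two matching bounds
$$
\limsup_{s\to+\infty}\lambda(s)\;\le\;\lambda^{\mathcal{NN}}\;\le\;\liminf_{s\to+\infty}\lambda(s).
$$
The lower bound is obtained by tracing the proof of Lemma \ref{lowerd} essentially verbatim, with (H1) replaced by (H2) and $\lambda^{\mathcal{DD}}$ by $\lambda^{\mathcal{NN}}$. The estimates on the outer pieces $I_0^{1/3}(s_i)$ and $I_{2/3}^1(s_i)$ depend only on the weak convergence \eqref{weakc}, the uniform $L^\infty$ bound \eqref{wb}, and (H2), so transfer unchanged. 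For the interior piece $I_{1/3}^{2/3}(s_i)$ the key conceptual simplification is that the Neumann principal eigenvalue admits the \emph{unconstrained} variational characterization
$$
\lambda^{\mathcal{NN}}\;=\;\min\Big\{\int_{1/3}^{2/3}r^{d-1}(|v'|^2+cv^2)\,dr:\ v\in H^1\bigl([\tfrac13,\tfrac23]\bigr),\ \int_{1/3}^{2/3}r^{d-1}v^2\,dr=1\Big\},
$$
with no boundary restriction. Consequently, the analogue of Lemma \ref{id}, which in the Dirichlet case was needed to force $\varphi_*(1/3)=\varphi_*(2/3)=0$, is simply not required here: membership of the normalized limit $v_1=\varphi_*/\|\varphi_*\|$ in $H^1$ alone yields $\int r^{d-1}(|v_1'|^2+cv_1^2)\ge\lambda^{\mathcal{NN}}$. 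The nondegenerate case $\int_{1/3}^{2/3}r^{d-1}\varphi^2\not\to 0$ is ruled out by (H2) exactly as in the derivation of \eqref{mud'}. Summing the three pieces and letting $\epsilon\to 0$ gives $\liminf\lambda(s)\ge\lambda^{\mathcal{NN}}$.

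For the upper bound, I would exploit the Neumann condition $\varphi_{\mathcal{NN}}'(1/3)=\varphi_{\mathcal{NN}}'(2/3)=0$, which allows a $C^1$ extension of $\varphi_{\mathcal{NN}}$ past $1/3$ and $2/3$ by constants $V_1=\varphi_{\mathcal{NN}}(1/3)$ and $V_2=\varphi_{\mathcal{NN}}(2/3)$. A bare constant extension is inadequate because $m\le\bar m$ still allows $m$ to reach $(1/6)^n$ on the plateaus $[Y_{n-1},X_n]$, so that $\sum_n V_1^2 \alpha^n e^{2s(1/6)^n}$ diverges and (H2) then pushes the Rayleigh quotient strictly above $\lambda^{\mathcal{NN}}$. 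The remedy is to cut the extension off inside a dip $[X_{n_0},Y_{n_0}]$, where $m\le -2(1/6)^{n_0}$ forces $e^{2sm}\le e^{-4s(1/6)^{n_0}}$ and provides a cheap corridor for a linear transition:
$$
\varphi_1^{(s)}(r)=\begin{cases}0,& r\in[0,X_{n_0}],\\ V_1(r-X_{n_0})/\alpha^{n_0},& r\in[X_{n_0},Y_{n_0}],\\ V_1,& r\in[Y_{n_0},1/3],\\ \varphi_{\mathcal{NN}}(r),& r\in[1/3,2/3],\end{cases}
$$
extended symmetrically on $[2/3,1]$ with $V_2$. Setting $P=\int_{1/3}^{2/3}r^{d-1}\varphi_{\mathcal{NN}}^2\,dr$, the interior contributes exactly $\lambda^{\mathcal{NN}}P$ to the numerator and $P$ to the denominator of the Rayleigh quotient. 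What remains is to show that the dip-transition piece and the short plateau-rest on $[Y_{n_0},1/3]$ contribute only $o(1)$ corrections, so that the quotient tends to $\lambda^{\mathcal{NN}}$.

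The main obstacle is the joint calibration of $n_0=n_0(s)$. The dip-transition cost is bounded by $V_1^2(1/3)^{d-1}\alpha^{-n_0}e^{-4s(1/6)^{n_0}}$, which demands $s(1/6)^{n_0}\to+\infty$ fast enough to beat the $\alpha^{-n_0}$ blow-up; simultaneously the plateau-rest on $[Y_{n_0},1/3]$ carries weight up to $e^{2s(1/6)^{n_0+1}}$ over a length of order $\alpha^{n_0+1}$, and demands $s(1/6)^{n_0+1}$ stay bounded. These two constraints pull in opposite directions, and reconciling them at a common $n_0(s)$, calibrated to the fixed parameter $\alpha\in(1/6,1)$ of $S_{\mathcal{NN}}$, is the technical heart of the argument. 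If the single-dip transition proves too expensive, one replaces it by a Lagrange-optimized distribution $\Delta_n\propto \alpha^n e^{4s(1/6)^n}$ over the block of dips $n\ge n_0$, reducing the total transition cost to $V_1^2/\sum_{n\ge n_0}\alpha^n e^{4s(1/6)^n}$, and then chooses $n_0(s)$ so that this sum diverges while the plateau contributions still vanish.
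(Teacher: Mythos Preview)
Your treatment of the lower bound is correct and matches the paper's Lemma~\ref{nnl}: the same decomposition into $I_0^{1/3}$, $I_{1/3}^{2/3}$, $I_{2/3}^1$ applies, and as you observe, the Neumann variational problem on $[\tfrac13,\tfrac23]$ carries no boundary constraint, so the analogue of Lemma~\ref{id} is simply unnecessary. (You presumably mean the \emph{degenerate} case $\int r^{d-1}\varphi^2\to0$ is the one ruled out by (H2).)

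For the upper bound your route differs from the paper's, and your analysis contains a correctable slip. The paper (Lemma~\ref{nnu}) does not cut off in a single dip. It builds a test function equal to $\varphi^{\mathcal{NN}}(\tfrac13)\,l_n(s)$ on each plateau $[Y_{n-1},X_n)$, where
\[
l_n(s)=\prod_{k\ge n}\bigl(1+\alpha^k e^{3s(1/6)^k}\bigr)^{-1},
\]
so that $l_n(s)\to0$ as $s\to\infty$ for each fixed $n$ while $l_n(s)\to1$ as $n\to\infty$ for each fixed $s$. This particular recursion makes the plateau cost $\alpha^n e^{2s(1/6)^n}l_n^2$ coincide with the dip-transition cost at level $n$, and the paper proves their sum $F(s)\to0$ by partitioning the index set according to whether $\sigma_n=\alpha^n e^{3s(1/6)^n}$ is large, moderate, or small relative to a parameter $\epsilon$. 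No cutoff index $n_0(s)$ is ever chosen.

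Your single-dip construction in fact works, but the plateau constraint you state is too strong. You write that the plateau rest ``demands $s(1/6)^{n_0+1}$ stay bounded''; what is actually required is only $\alpha^{n_0+1}e^{2s(1/6)^{n_0+1}}\to0$, i.e.\ $s(1/6)^{n_0+1}<\tfrac12(n_0+1)|\ln\alpha|$ with a growing margin. The dip constraint is $s(1/6)^{n_0}>\tfrac14 n_0|\ln\alpha|$ with a growing margin. Both are of order $n_0$ and are compatible: taking $n_0(s)$ to be the largest integer with $s(1/6)^{n_0}\ge c\,n_0$, for any fixed $c\in\bigl(\tfrac14|\ln\alpha|,\tfrac12|\ln\alpha|\bigr)$, drives both error terms to zero. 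Thus your Lagrange fallback is unnecessary---and note that, as you have formulated it, it would not repair the incompatibility you perceive: if $s(1/6)^{n_0+1}$ were genuinely required to stay bounded, then every term $\alpha^n e^{4s(1/6)^n}$ with $n\ge n_0$ would be $O(\alpha^n)$, and the sum $\sum_{n\ge n_0}\alpha^n e^{4s(1/6)^n}$ could not diverge either.
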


%Similarly, we prove it by estimating upper and lower bound of $\lambda_1^{\mathcal{N}}(s)$.

We first consider the lower bound of $\lambda(s)$.
\begin{lemma}[Lower bound]\label{nnl}
Under the conditions in Theorem \ref{neumann}, we have
$$\liminf_{s\to+\infty}\lambda(s)\ge \lambda^{\mathcal{NN}}.$$
\end{lemma}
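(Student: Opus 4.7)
The plan is to mirror the structure of the Dirichlet lower-bound argument in Lemma \ref{lowerd}, but exploiting the fact that Neumann eigenfunctions need no boundary condition. Using the transformation $w(s;r)=r^{(d-1)/2}\varphi(s;r)$ and the variational characterization \eqref{voa}, together with $m(r)=0$ on $[\frac{1}{3},\frac{2}{3}]$, I would write
$$
\lambda(s_i)\ge I_0^{1/3}(s_i)+I_{1/3}^{2/3}(s_i)+I_{2/3}^1(s_i),
$$
with the same three pieces as in \eqref{estd'}. The two boundary integrals $I_0^{1/3}(s_i)$ and $I_{2/3}^1(s_i)$ can be estimated using the weak compactness \eqref{weakc} and the $L^\infty$ control \eqref{wb}, via exactly the cutoff-function construction leading to \eqref{de'} and \eqref{deb'}. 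This part of the proof transfers unchanged from the Dirichlet case, since it uses only $m(r)=0$ on $[\frac{1}{3},\frac{2}{3}]$ and the weak limit $\mu$.

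For the central piece $I_{1/3}^{2/3}(s_i)$, I would split into two cases. If $\liminf_{i\to+\infty}\int_{1/3}^{2/3}r^{d-1}|\varphi(s_i;r)|^2\,dr>0$, normalize to $v(s_i;r)=\varphi(s_i;r)/(\int_{1/3}^{2/3}r^{d-1}\varphi^2)^{1/2}$ and extract, via Lemma \ref{converge}, a $C^1_{loc}$ limit $v_1$ with $\int_{1/3}^{2/3}r^{d-1}v_1^2\,dr=1$. The crucial simplification over the Dirichlet case is that I do \emph{not} need any analog of Lemma \ref{id}: since $\lambda^{\mathcal{NN}}$ is characterized by
$$
\lambda^{\mathcal{NN}}=\min_{\int_{1/3}^{2/3}r^{d-1}u^2=1}\int_{1/3}^{2/3}r^{d-1}\bigl(|u'|^2+c(r)u^2\bigr)\,dr,
$$
over $H^1([\frac{1}{3},\frac{2}{3}])$ with no boundary constraint, the function $v_1$ is automatically admissible. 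Combining the lower semicontinuity of the Dirichlet energy with the mass estimate \eqref{same} yields
$$
\lim_{i\to+\infty}I_{1/3}^{2/3}(s_i)\ge \mu\bigl([\tfrac{1}{3}+2\epsilon,\tfrac{2}{3}-2\epsilon]\bigr)\,\lambda^{\mathcal{NN}},
$$
in strict parallel with \eqref{ddab}. Note that the assumption $m\in S_{\mathcal{NN}}$ is not needed to derive this bound, but only to allow the upper bound to match (and in the case-analysis below).

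The second case, where $\int_{1/3}^{2/3}r^{d-1}\varphi(s_i;r)^2\,dr\to 0$ along a subsequence, forces $\mu((\frac{1}{3},\frac{2}{3}))=0$ and therefore $\mu([0,\frac{1}{3}])+\mu([\frac{2}{3},1])=1$. Plugging this into \eqref{de'} and \eqref{deb'} and taking $\epsilon\to 0$ gives $\liminf\lambda(s_i)\ge\min_{[0,1/3]\cup[2/3,1]}c(r)>\lambda^{\mathcal{NN}}$ by hypothesis (H2); combined with the (separately proved) upper bound $\limsup\lambda(s)\le\lambda^{\mathcal{NN}}$, this contradicts the definition of the subsequence, ruling out the degenerate case. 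Assembling the three pieces and letting $\epsilon\to 0$ produces $\liminf_{s\to+\infty}\lambda(s)\ge\lambda^{\mathcal{NN}}$.

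The main obstacle is really the bookkeeping in the degenerate case, which requires (H2) to upgrade $c>\lambda^{\mathcal{NN}}$ on the outer annuli into a genuine strict inequality after accounting for the $-C\epsilon$ error terms coming from the cutoffs $\zeta$ and $\xi$. The role of the condition $m\in S_{\mathcal{NN}}$ itself is comparatively mild here — it is what will guarantee sharpness via the matching upper bound — whereas for the lower bound we mainly exploit the universal features that $m$ vanishes on $[\frac{1}{3},\frac{2}{3}]$, the weak limit $\mu$ exists, and the Neumann variational characterization is unconstrained.
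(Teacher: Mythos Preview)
Your proposal is correct and follows essentially the same decomposition as the paper. One worthwhile simplification in the paper's version concerns the middle piece $I_{1/3}^{2/3}(s_i)$: the paper does \emph{not} pass to a $C^1_{loc}$ limit $v_1$ via Lemma~\ref{converge} at all. Since the Neumann Rayleigh quotient is minimized over all of $H^1([\tfrac{1}{3},\tfrac{2}{3}])$ with no boundary constraint, the normalized function $v(s_i;r)$ is itself admissible for each fixed $i$, yielding
\[
\int_{1/3}^{2/3} r^{d-1}\bigl(|v'(s_i;r)|^2+c(r)|v(s_i;r)|^2\bigr)\,dr \ \ge\ \lambda^{\mathcal{NN}}
\]
directly, with no need for lower semicontinuity or extraction of a subsequence. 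Your route through a limit $v_1$ works, but is more elaborate than necessary here. The paper also does not isolate the degenerate case as a separate contradiction argument: it is absorbed automatically in the final combination using (H2), since if $\mu((\tfrac{1}{3},\tfrac{2}{3}))=0$ the outer pieces alone already dominate $\lambda^{\mathcal{NN}}$.
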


\begin{proof}
Recall that we denote $\varphi(s;r)$ as the principal eigenfunction of \eqref{eq} with respect to $\lambda(s)$ for any fixed $s>0$. %satisfying the normalization condition $\int_0^1r^{d-1}e^{2sm(r)}|\varphi(s;r)|^2dr=1$ for each $s>0$.
By the variational characterization \eqref{voe} and the transformation $w(s_{i};r)=r^{\frac{d-1}{2}}e^{sm(r)}\varphi(s_{i};r)$, % \eqref{de'} and \eqref{deb'},
we obtain
\begin{eqnarray}\label{ne}
\lambda(s_{i})&=&\int_0^1r^{d-1}e^{2sm(r)}(|\varphi'(s_{i};r)|^2+c(r)|\varphi(s_{i};r)|^2)dr\nonumber\\
%&=&\left(\int_0^a+\int_b^1\right)(|w(s_{i_j};x)'-sm'w(s_{i_j};x)|^2+c(x)|w(s_{i_j};x)|^2)dx\nonumber\\
%&&+\int_a^b e^{2sm}(|\varphi'(s_{i_j};x)|^2+c(x)|\varphi(s_{i_j};x)|^2)dx\nonumber\\
&\ge&\left(\int_{0}^{\frac{1}{3}}+\int_{\frac{2}{3}}^{1}\right)c(r)w^2(s_{i};r)dx+\int_{\frac{1}{3}}^{\frac{2}{3}}r^{d-1} (|\varphi'(s_{i};r)|^2+c(r)|\varphi(s_{i};r)|^2)dr.
%&\ge&\min_{[0,a]\cup[b,1]}c(x)(\mu([0,a+3\epsilon])+\mu([b-3\epsilon,1]))+\int_a^b (|\varphi'(s_{i};x)|^2+c|\varphi(s_{i};x)|^2)dx
%&\ge&\left(\sum_{n=1}^{k_{\delta}}\min_{x\in[\eta_n-\epsilon,\eta_n+\epsilon]}c(x)\mu(\{\eta_n\})+\sum_{n=1}^{\hat{k}_{\delta}}\right)\int_a^be^{2sm}(|\varphi'|^2+c|\varphi|^2)dx\nonumber\\
\end{eqnarray}
For the first two terms in \eqref{ne}, we can directly use \eqref{de'} and \eqref{deb'} to get
\begin{equation}\label{nne}
\liminf_{i\to+\infty}\left(\int_{0}^{\frac{1}{3}}+\int_{\frac{2}{3}}^{1}\right)c(r)w^2(s_{i};r)dr\ge\min_{[0,{\frac{1}{3}}]\cup[{\frac{2}{3}},1]}c(r)\left[\mu([0,{\frac{1}{3}}+3\epsilon])+\mu([{\frac{2}{3}}-3\epsilon,1])\right]-C\epsilon.
\end{equation}

For the last term in \eqref{ne}, it is sufficient to prove that
\begin{equation}\label{abnn}
\liminf_{i\to+\infty}\int_{\frac{1}{3}}^{\frac{2}{3}} r^{d-1} (|\varphi'(s_{i};r)|^2+c(r)|\varphi(s_{i};r)|^2)dr
\ge\lambda^{\mathcal{NN}}\mu([{\frac{1}{3}}+2\epsilon,{\frac{2}{3}}-2\epsilon]).
\end{equation}
By taking the limit inferior on both sides of \eqref{ne}, using the estimates in \eqref{nne}, \eqref{abnn}, and letting $\epsilon \to 0$, we obtain the desired result, $\liminf_{s\to+\infty}\lambda(s)\ge \lambda^{\mathcal{NN}}$ by the assumption in (H2).
\par

Next, we aim to prove the inequality \eqref{abnn}. With \eqref{assdd}, we could define
$$
v(s_{i};r)=\frac{\varphi(s_{i};r)}{\sqrt{\int_{\frac{1}{3}}^{\frac{2}{3}}r^{d-1}\varphi^2(s_{i};r)dr}}\in H^1([{\frac{1}{3}},{\frac{2}{3}}]).
$$
And it is easy to check that $\int_{\frac{1}{3}}^{\frac{2}{3}}r^{d-1}v^2(s_i;r)dr=1$.
Then, we can write that
\begin{eqnarray*}
\int_{\frac{1}{3}}^{\frac{2}{3}}r^{d-1}(|\varphi'(s_{i};r)|^2+c|\varphi(s_{i};r)|^2)dr
&=&\int_{\frac{1}{3}}^{\frac{2}{3}}r^{d-1}\varphi^2(s_{i};r)dr\int_{\frac{1}{3}}^{\frac{2}{3}}r^{d-1} (|v'(s_{i};r)|^2+c|v(s_{i};r)|^2)dr.
\end{eqnarray*}
By taking the limit inferior on both sides of the aforementioned equality, utilizing the estimate in  \eqref{same} and considering the characterization of $\lambda^{\mathcal{NN}}$, we can establish \eqref{abnn} and thereby complete our proof.
%If \eqref{wlog} does not hold true,  there is a contradiction and we complete the proof.
\end{proof}

Before presenting the proof of the upper bound, we first introduce the definition of the test function. For any function $m(r)\in S_{\mathcal{NN}}$, there exist constants $\delta$ and $\alpha$ such that $m(r)\leq \bar{m}(\alpha;r)$ on $[\delta,{\frac{1}{3}})\cup({\frac{2}{3}},1-\delta]$.
With these constants, we have
\begin{definition}\label{dnn}
Definition of $\sigma_n(s)$ and $l_n(s)$.
\begin{equation}
\sigma_n(s)\triangleq\alpha^n e^{3  ({\frac{1}{6}})^{n}s }
\text{ \quad
and \quad}
 l_n(s)\triangleq
\frac { 1 } { \prod _ { k = n } ^ {+ \infty } \left( 1 + \alpha^k e^{ 3({\frac{1}{6}})^{k}s} \right) }=\frac { 1 } { \prod _ { k = n } ^ {+ \infty } \left( 1 + \sigma_k(s) \right) }.%>0\ \ \text{and}\ \ \hat{\sigma}_n(s)=\hat{\alpha}^ne^{s\hat{M}^{n}(1+\nu)}>0 .
\end{equation}
 % and$$I_b^1[\varphi](s)\triangleq\int_b^1 e^{2sm(x)}\left(|\varphi'|^2+c(x)|\varphi|^2\right)dx.$$
\end{definition}
\begin{remark}\label{propnn}
From Definition \ref{dnn}, it is easy to check that\par
(1) $l_n(s)$ is increasing in $n$, $\lim_{n\to +\infty}l_n(s)=1$ for any fixed $s>0$ and $\lim_{s\to +\infty}l_n(s)=0$ for any fixed $n\in \mathbb{N}$;\par
(2) $l_{n+1}(s)-l_n(s)=\sigma_n(s)l_n(s)$.
\end{remark}

With these preparations, we can now proceed to provide an upper bound estimate.
%\red{The crucial step is to construct a suitable test function $\varphi$ such that
%\begin{equation}
%\int_0^a r^{d-1} e^{2sm}(|\varphi'|^2+c|\varphi|^2)dr+\int_b^1 r^{d-1} e^{2sm}(|\varphi'|^2+c|\varphi|^2)dr\to 0,\quad s\to+\infty.
%\end{equation}}

\begin{lemma}[Upper bound]\label{nnu}
Under the conditions in Theorem \ref{neumann}, we have
$$\limsup_{s\to+\infty}\lambda(s)\le \lambda^{\mathcal{NN}}.$$
\end{lemma}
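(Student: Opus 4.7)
My plan is to apply the variational characterization \eqref{voe} with an $H^1$ test function $\varphi_1$ that coincides with the Neumann principal eigenfunction $\varphi_{\mathcal{NN}}$ on $[1/3,2/3]$ and whose extension to the oscillating regions $[0,1/3]\cup[2/3,1]$ contributes only a vanishing amount to the Rayleigh quotient as $s\to+\infty$. By the symmetry $m(r)=m(1-r)$ I describe only $[\delta,1/3]$. Set $a=\varphi_{\mathcal{NN}}(1/3)$. On each ``peak'' interval $[Y_{n-1},X_n]$ where $\bar m(\alpha;r)=(1/6)^n$ and $e^{2sm}$ may be large, I take $\varphi_1\equiv a\,l_n(s)$; on each adjacent ``valley'' interval $[X_n,Y_n]$ where $\bar m=-2(1/6)^n$ and $e^{2sm}$ is tiny, I let $\varphi_1$ be the affine interpolant between $a\,l_n$ and $a\,l_{n+1}$. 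Since $l_n(s)\to 1$ as $n\to+\infty$ (Remark \ref{propnn}), $\varphi_1$ matches $\varphi_{\mathcal{NN}}(1/3)$ continuously at $r=1/3$; on $[0,\delta]$, I extend $\varphi_1$ by zero with a standard smoothing.

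\textbf{Bounding the Rayleigh quotient.} The denominator is $\ge D:=\int_{1/3}^{2/3}r^{d-1}\varphi_{\mathcal{NN}}^2\,dr$ because $\varphi_1=\varphi_{\mathcal{NN}}$ on $[1/3,2/3]$ and $m\equiv 0$ there. The interior numerator equals $\lambda^{\mathcal{NN}}D$. For the boundary numerator I use $m\le\bar m$ to bound $e^{2sm}\le e^{2s/6^n}$ on peaks and $e^{2sm}\le e^{-4s/6^n}$ on valleys. On $[Y_{n-1},X_n]$ (where $\varphi_1'=0$) the contribution is $\le C a^2 l_n^2 e^{2s/6^n}\alpha^n = C a^2 l_n^2\sigma_n e^{-s/6^n}$, via the identity $\alpha^n e^{2s/6^n}=\sigma_n e^{-s/6^n}$ coming from the definition of $\sigma_n$. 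On $[X_n,Y_n]$ the slope of $\varphi_1$ is $a(l_{n+1}-l_n)/\alpha^n=a\,l_n\sigma_n/\alpha^n$ by Remark \ref{propnn}, and the gradient integral is $\le C a^2 l_n^2\sigma_n^2 e^{-4s/6^n}/\alpha^n$, which by the same identity again equals $C a^2 l_n^2\sigma_n e^{-s/6^n}$; the $c\varphi_1^2$ piece on this interval is negligible. Summing, $\lambda(s)\le \lambda^{\mathcal{NN}}+\frac{C}{D}\sum_{n\ge 1} l_n^2\sigma_n e^{-s/6^n}$.

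\textbf{Vanishing of the series and main obstacle.} It remains to prove $S(s):=\sum_{n\ge 1} l_n^2\sigma_n e^{-s/6^n}\to 0$. Let $n^*=n^*(s)$ be the smallest integer with $\sigma_{n^*}(s)\le 1$; then $n^*(s)\to+\infty$ as $s\to+\infty$ and $\sigma_{n^*-1}(s)>1$. The latter gives $s/6^{n^*-1}>(n^*-1)|\log\alpha|/3$, so $e^{-s/6^n}\le e^{-s/6^{n^*-1}}<\alpha^{(n^*-1)/3}$ for $n\le n^*-1$, and $e^{-s/6^{n^*}}<\alpha^{(n^*-1)/18}$. The algebraic identity $l_n^2\sigma_n=l_n(l_{n+1}-l_n)\le\tfrac{1}{2}(l_{n+1}^2-l_n^2)$ telescopes to $\sum_n l_n^2\sigma_n\le\tfrac{1}{2}$, so the head satisfies $\sum_{n\le n^*-1}l_n^2\sigma_n e^{-s/6^n}\le\tfrac{1}{2}\alpha^{(n^*-1)/3}$ and the $n=n^*$ term is $\le\alpha^{(n^*-1)/18}$. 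For the tail, $s/6^{n^*+k}\le n^*|\log\alpha|/(3\cdot 6^k)$ yields $\sigma_{n^*+k}\le\alpha^{(1-6^{-k})n^*+k}\le\alpha^{5n^*/6+k}$ for $k\ge 1$, so $\sum_{n>n^*}l_n^2\sigma_n\le\sum_{k\ge 1}\sigma_{n^*+k}\le C\alpha^{5n^*/6}$. All three pieces are $O(\alpha^{(n^*-1)/18})\to 0$, giving $S(s)\to 0$ and therefore $\limsup_{s\to+\infty}\lambda(s)\le\lambda^{\mathcal{NN}}$. The chief obstacle is precisely this series estimate: locating the critical scale $n^*(s)\sim\log_6 s$ where $\sigma_n$ passes through $1$, and showing that the factor $e^{-s/6^n}$ defeats the otherwise $O(1)$ mass of $\sum l_n^2\sigma_n$ concentrated near $n^*$.
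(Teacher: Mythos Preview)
Your test function $\varphi_1$ is exactly the one the paper uses (their display \eqref{var1}): constant $a\,l_n(s)$ on the peak intervals $[Y_{n-1},X_n]$, affine on the valleys $[X_n,Y_n]$, equal to $\varphi_{\mathcal{NN}}$ on $[\tfrac13,\tfrac23]$, with a short ramp near $r=\delta$. The Rayleigh--quotient bookkeeping in your ``Bounding the Rayleigh quotient'' paragraph also matches the paper's \eqref{lbd1}--\eqref{est E(s) 0}; in particular your observation that the peak and the valley-gradient contributions both reduce to $l_n^2\sigma_n e^{-s/6^n}$ is precisely how the paper passes from the third to the fourth line of \eqref{est E(s) 0}.

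Where you genuinely diverge from the paper is in the proof that $S(s)=\sum_{n\ge 1} l_n^2\sigma_n e^{-s/6^n}\to 0$. The paper introduces an auxiliary $\epsilon>0$, defines two moving indices $K_1(s,\epsilon)\le K_2(s,\epsilon)$ according to whether $\sigma_n>1/\epsilon$, $\sigma_n\in(\epsilon,1/\epsilon)$, or $\sigma_n<\epsilon$, shows the middle range contains at most one term for large $s$, and then performs a double limit $s\to\infty$ followed by $\epsilon\to 0$. Your argument is more direct: a single threshold $n^*(s)$ where $\sigma_n$ crosses $1$, the telescoping identity $l_n^2\sigma_n=l_n(l_{n+1}-l_n)\le\tfrac12(l_{n+1}^2-l_n^2)$ to bound the total head mass by $\tfrac12$, and the explicit power bounds $e^{-s/6^{n^*-1}}<\alpha^{(n^*-1)/3}$ and $\sigma_{n^*+k}\le\alpha^{5n^*/6+k}$ for the tail. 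This is correct and arguably cleaner, since it avoids the $\epsilon$--double-limit and yields a quantitative rate $S(s)=O(\alpha^{(n^*-1)/18})$.

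One point you should not leave as ``standard smoothing'': on $[0,\delta]$ the hypothesis $m\le\bar m$ gives no control. The paper handles this (their term $E(s)$) by noting that $m(\delta)\le\bar m(\delta)=\tfrac16$, so by continuity $m\le\tfrac14$ on some $(\delta-\delta_1,\delta]$, and then the ramp contributes at most $Ce^{s/2}l_1^2(s)$, which tends to $0$ because $l_1(s)\le(1+\alpha e^{s/2})^{-1}$. This is routine but not vacuous, and your sketch should include it.
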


\begin{proof}
Note that $m(r)\in S_{\mathcal{NN}}$ and there exist the constants $\delta$, $\alpha$ such that $m(r)\leq\bar{m}(\alpha;r)$ on $[\delta,{\frac{1}{3}})\cup({\frac{2}{3}},1-\delta]$. Recall that $\bar{m}(\delta)={\frac{1}{6}}$, so by the continuity of $m(r)$, there exists a small positive constant $\delta_1$ such that $m(r)\leq \frac{1}{4}$ for $r\in (\delta-\delta_1,\delta]$.
We define the test function $\varphi_1\in H^1([0,1])$ as
\begin{equation}\label{var1}
\varphi _ { 1 }(s;r) = \left\{ \begin{array}{lcl}
0, && r\in[0,\delta-\delta_1),\\
\frac{\varphi^{\mathcal{NN}}({\frac{1}{3}})l_1(s)}{\delta_1}(r-\delta+\delta_1), && r\in[\delta-\delta_1,\delta),\\
\varphi^{\mathcal{NN}}({\frac{1}{3}})l_n(s), & & r\in [Y_{n-1}, X_{n}),\\
\varphi^{\mathcal{NN}}({\frac{1}{3}})\left(l_n(s)+\frac { l_{n+1}(s) - l _ { n }(s) } { \alpha ^{ n } }(r-X_n)\right), & & r\in [X_{n}, Y_n), \,\, \\
\varphi^{\mathcal{NN}}(r),&& r\in[{\frac{1}{3}},{\frac{2}{3}}],\\
\frac{\varphi^{\mathcal{NN}}({\frac{2}{3}})}{\varphi^{\mathcal{NN}}({\frac{1}{3}})} \varphi_1(s;1-r),& & r\in({\frac{2}{3}},1], \end{array} \right.
\end{equation}
where
$\varphi^{\mathcal{NN}}(r)$ is the bounded positive principal eigenfunction corresponding to $\lambda^{\mathcal{NN}}$ in Definition \ref{d2}, the points $\{X_n\}_{n=1}^{+\infty}$, $\{Y_{n-1}\}_{n=1}^{+\infty}$ are defined in Definition \ref{DM} and $l_n(s)$ is in Definition \ref{dnn}. Then, by the variational characterization of $\lambda(s)$ in \eqref{voa}, we have
\begin{eqnarray}
\lambda(s)
&\le&\frac{ \left(\int_0^{\frac{1}{3}}+\int_{\frac{1}{3}}^{\frac{2}{3}}+\int_{\frac{2}{3}}^1\right)r^{d-1}e^{2sm(r)}\left(|\varphi_1'(s;r)|^2+c|\varphi_1(s;r)|^2\right)dr} {\int_{\frac{1}{3}}^{\frac{2}{3}}r^{d-1}|\varphi_1(s;r)|^2dr}\nonumber\\
%&=&\frac{ E_0^a[\varphi_1](s)+\int_a^b\left(|\varphi_1'|^2+c|\varphi_1|^2\right)dx+E_b^1[\varphi_1](s)}{\int_a^b|\varphi_1|^2dx}\nonumber\\
&=& \lambda^{\mathcal{NN}}+ \frac{ I_0^{\frac{1}{3}}[\varphi_1](s)+I_{\frac{2}{3}}^1[\varphi_1](s)}{\int_{\frac{1}{3}}^{\frac{2}{3}}r^{d-1}|\varphi_1|^2dr},\label{lbd1}
\end{eqnarray}
where we use the notation $$I_{e_1}^{e_2}[\varphi](s)\triangleq\int_{e_1}^{e_2}r^{d-1}e^{2sm(r)}\left(|\varphi'(r)|^2+c(r)|\varphi(r)|^2\right)dr,$$
for any function $\varphi(r)\in H^1([0,1])$ and two points $e_1<e_2$ on $(0,1)$.
%where $E_0^a[\varphi_1](s)=\int_0^ae^{2sm(x)}\left(|\varphi_1'|^2+c|\varphi_1|^2\right)dx$ and $E_b^1[\varphi_1](s)=\int_b^1 e^{2sm(x)}\left(|\varphi_1'|^2+c|\varphi_1|^2\right)dx$. We derive the lemma:
%We insert $\varphi_1$ into $E_0^a[\varphi](s)$ and $E_b^1[\varphi](s)$ and obtain:
%\begin{lemma}\label{4l}
%For $m(x)\in S^1_{\mathcal{NN}}$, we get
%$E_0^a[\varphi_1](s)$ and $E_b^1[\varphi_1](s)$ converge to 0 as $s\to+\infty$.
%\end{lemma}
\par
We consider $I_0^{\frac{1}{3}}[\varphi_1](s)$ first (see that $I_{\frac{2}{3}}^1[\varphi_1](s)$ is just the symmetric of $I_0^{\frac{1}{3}}[\varphi_1](s)$) and we have
\begin{eqnarray}
I_0^{\frac{1}{3}}[\varphi_1](s)%&=&\int_0^ae^{2sm(x)}\left(|\varphi_1'|^2+c|\varphi_1|^2\right)dx\nonumber\\
&=& \int_{0}^{\delta}r^{d-1}e^{2sm(r)}\left(|\varphi_1'|^2+c(r)|\varphi_1|^2\right)dr+\int_\delta^{\frac{1}{3}} r^{d-1}e^{2sm(r)}\left(|\varphi_1'|^2+c(r)|\varphi_1|^2\right)dr\nonumber\\
&\le&\int_{\delta-\delta_1}^{\delta} r^{d-1} e^{2sm(r)}\left(|\varphi_1'|^2+c(r)|\varphi_1|^2\right)dr+\sum_{n=1}^{+\infty} \int_{Y_{n-1}}^{Y_{n}}r^{d-1}e^{2s\bar{m}(x)}\left(|\varphi_1'|^2+c(x)|\varphi_1|^2\right)dx\nonumber\\
&\le&C\left[e^{\frac{1}{2}s}l_1^2(s)+ \sum_{n=1}^{+\infty} \alpha^{n} e^{2(\frac{1}{6})^{n}s}l_n^2
+\sum_{n=1}^{+\infty}\left(e^{-4(\frac{1}{6})^{n}s} \frac{(l_{n+1}-l_n)^2}{\alpha^{n}}+\alpha^{n}e^{-4 (\frac{1}{6})^{n}s}l_{n+1}^2\right)\right]\nonumber\\
&\le&C\left(e^{\frac{1}{2}s}l_1^2(s)+\sum_{n=1}^{+\infty}\alpha^{n}e^{2(\frac{1}{6})^{n}s}l_n^2+\sum_{n=1}^{+\infty}\alpha^{n}e^{-4 (\frac{1}{6})^{n}s}l_{n+1}^2\right)\nonumber\\
&\triangleq& C(E(s)+F(s)+G(s)),\label{est E(s) 0}
\end{eqnarray}
where $C=C(\varphi^{\mathcal{NN}}(\frac{1}{3}), \varphi^{\mathcal{NN}}(\frac{2}{3}), d,  c^*)$ is a positive constant and the last inequality follows from (2) of Remark \ref{propnn}, and we denote the following notations
\begin{eqnarray}\label{FG}
E(s)=e^{\frac{1}{2}s}l_1^2(s),\quad
F(s)
=\sum_{n=1}^{+\infty} \alpha^{n}e^{2(\frac{1}{6})^{n}s}l_n^2,%\frac { \alpha^{n}e^{2sh^{n}} } { \prod _ { k = n } ^ { +\infty } \left( 1 + \sigma _ { k }(s) \right)^2 }
\quad
G(s)=\alpha^{n}e^{-4 (\frac{1}{6})^{n}s}l_{n+1}^2.%\sum_{n=1}^{+\infty} \alpha^{n}e^{-2s\nu h^{n}} l_{n+1}^2.
\end{eqnarray}\par
To study the upper bound of \eqref{est E(s) 0}, we estimate three functions in \eqref{FG} respectively.\par
\textbf{Estimate of $E(s)$:}
From the definition of $l_1(s)$ (Remark \ref{propnn} (2)), we get
\begin{equation}
E(s)=\frac { e^{\frac{1}{2}s} } { \prod _ { k = 1 } ^ {+ \infty } \left( 1 + \alpha^ke^{3(\frac{1}{6})^{k}s} \right)^2 }\to 0, \quad \text{ as } s\to+\infty.
\end{equation}

\textbf{Estimates for $F(s)$:}
By definition of $\sigma_n(s)$ in Definition \ref{dnn}, we could say that for any constant  $\epsilon\in (0, 1)$, there exist integers $0<K_1(s,\epsilon)\le K_2(s,\epsilon)$ such that
\begin{equation}\label{K1K2}
\left\{ \begin{array} { l } \sigma_1(s),\ \sigma_2(s),...\ \sigma_{K_1}(s)\in (\frac{1}{\epsilon},+\infty), \\
\sigma_{K_1+1}(s),\ \sigma_{K_1+2}(s),...\ \sigma_{K_2}(s)\in (\epsilon, \frac{1}{\epsilon})\\
\sigma_{K_2+1}(s),\ \sigma_{K_2+2}(s),...\in (0,\epsilon), \end{array} \right.
\end{equation}
%according to Remark \ref{propnn},
where we use the notation $K_1$ and $K_2$ for simplicity but keep in mind that for any fixed $\epsilon$,  $K_1$ and $K_2$ are functions depending on $s$. Based on this, we insert $l_n(s)$ into   $F(s)$ in \eqref{FG} and divide it into three parts
\begin{eqnarray}\label{DF}
F(s)
=\left(\sum_{n=1}^{K_1}+\sum_{n=K_1+1}^{K_2}+\sum_{n=K_2+1}^{+\infty}\right)\frac { \alpha^{n}e^{2(\frac{1}{6})^{n}s}} { \prod _ { k = n } ^ { +\infty } \left( 1 + \sigma _ { k }(s) \right)^2 }
\triangleq F_1(s)+F_2(s)+F_3(s).
\end{eqnarray}\par

We give the upper bound of $F_1(s)$, $F_2(s)$ and $F_3(s)$ in the following, respectively.\par

For $F_1(s)$,  noting that $\sigma_n(s)=\alpha^ne^{3(\frac{1}{6})^ns}$, we have
\begin{eqnarray}
F_1(s)
%&=&\sum_{n=1}^{K_1}\frac { \beta^{n-1}e^{2sM^{n-1}} } { \prod _ { k = n } ^ { \infty } \left( 1 + \sigma _ { k }(s) \right)^2 }\nonumber\\
&=&\sum_{n=1}^{K_1}\frac{\alpha^{n}e^{2(\frac{1}{6})^{n}s}}{ \prod _ { k = n } ^ { +\infty } \left( 1 + \sigma _ { k }(s) \right)^2}%\nonumber\\
=\sum_{n=1}^{K_1}\frac{\sigma_ne^{-s(\frac{1}{6})^ns}}{ \prod _ { k = n } ^ { +\infty } \left( 1 + \sigma _ { k }(s) \right)^2}
\le\sum_{n=1}^{K_1}\frac{\sigma_n}{ \prod _ { k = n } ^ { +\infty } \left( 1 + \sigma _ { k }(s) \right)^2}\nonumber\\
&\le&\frac{1}{1+\sigma_{K_1}}\sum_{n=1}^{K_1}\frac{( \sigma_n+1)-1}{ \prod _ { k = n } ^ { +\infty } \left( 1 + \sigma _ { k }(s) \right)}
\le\frac{1}{1+\frac{1}{\epsilon}}\sum_{n=1}^{K_1}(l_{n+1}(s)-l_n(s))%\nonumber\\
\le\frac{1}{1+\frac{1}{\epsilon}}%\nonumber\\
\le \epsilon,\label{F1}
\end{eqnarray}
where  the third inequality follows from the fact $\sigma_n(s)>\frac{1}{\epsilon}$ for $n\le K_1$ in \eqref{K1K2} and the last inequality follows from
(1) of Remark \ref{propnn}.
 \par

Then we consider $F_2(s)$.
First we claim that for sufficiently large $s$,
there is at most one term $\sigma_n$ lying in the interval $(\epsilon, \frac{1}{\epsilon})$. In fact, if not, denote $K=K_1+1$ and we have
\begin{equation}
\epsilon<\alpha^{K+1}e^{3(\frac{1}{6})^{K+1}s}<\alpha^Ke^{3(\frac{1}{6})^Ks}<\frac{1}{\epsilon} \label{one}
\end{equation}
for large $s$.
From the last inequality of \eqref{one}, we get
\begin{equation}
s<\frac{1}{3(\frac{1}{6})^{K} }\ln\frac{1}{\epsilon\alpha^K},\label{s1}
\end{equation}
which means that $K\to+\infty$ as $s\to+\infty$. Substituting \eqref{s1} into the first inequality of \eqref{one}, we obtain
\begin{eqnarray}
\epsilon&<&\alpha^{K+1}e^{\frac{1}{6}\ln\frac{1}{\epsilon\alpha^K}}=\frac{\alpha^{\frac{5}{6}K+1}}{\epsilon^{\frac{1}{6}}}.\label{con1}
\end{eqnarray}
Since $\alpha\in(0,1)$, the right hand side of \eqref{con1} tends to zero as $s\to+\infty$, which leads to a contradiction. So the only possible term $\sigma_K\in(\epsilon,\frac{1}{\epsilon})$ may determine the bound of $s$, namely,
\begin{equation}
\frac{1}{3(\frac{1}{6})^{K} }\ln\frac{\epsilon}{\alpha^K}<s<\frac{1}{3(\frac{1}{6})^{K} }\ln\frac{1}{\epsilon\alpha^K}.\label{s}
\end{equation}
Thus for large $s$, we obtain
\begin{eqnarray}
F_2(s)&=&\frac { \alpha^Ke^{2(\frac{1}{6})^Ks} } { \prod _ { k = K } ^ { +\infty } \left( 1 + \sigma _ { k }(s) \right)^2 }
=\frac{\sigma_Ke^{-(\frac{1}{6})^Ks}}{ \prod _ { k = K } ^ { +\infty } \left( 1 + \sigma _ { k }(s) \right)^2}\le\frac{1}{\epsilon}e^{-\frac{1 }{3 }\ln \frac{\epsilon}{\alpha^K}}=\frac{1}{\epsilon}\left(\frac{\alpha^K}{\epsilon}\right)^{\frac{1}{3}},\label{F2}
\end{eqnarray}
where the inequality follows from $\sigma_K\in(\epsilon,\frac{1}{\epsilon})$ and the first inequality of \eqref{s}. If there is no term $\sigma_K$ in $(\epsilon,\frac{1}{\epsilon})$, $F_2(s)=0$ still satisfies \eqref{F2}.
\par

For $F_3(s)$, when $n\ge K_2+1$, we have $\sigma_n\in(0,\epsilon)$ from \eqref{K1K2}, and then
\begin{equation}\label{k2}
\sigma_n=\alpha^ne^{3(\frac{1}{6})^{n}s}\le \alpha^{n-K_2-1}\sigma_{K_2+1}\le\alpha^{n-K_2-1}\epsilon.
\end{equation}
Thus,  we obtain
\begin{eqnarray}
F_3(s)
=\sum_{n=K_2+1}^{+\infty}\frac{\alpha^{n}e^{2(\frac{1}{6})^{n}s}}{ \prod _ { k = n } ^ { +\infty } \left( 1 + \sigma _ { k }(s) \right)^2}
\le\sum_{n=K_2+1}^{+\infty}\sigma_n
\le\epsilon\sum_{n=K_2+1}^{+\infty}\alpha^{n-K_2-1}
\le C\epsilon,\label{F3}
\end{eqnarray}
where the last inequality follows from the assumption $\sum_{n=1}^{+\infty}\alpha^n<1$ in Definition \ref{e1}.\par

Hence,
combining \eqref{F1}, \eqref{F2} with \eqref{F3}, we obtain the estimate of $F(s)$ for large $s$:
\begin{eqnarray}\label{F}
F(s)=F_1(s)+F_2(s)+F_3(s)
\le\epsilon+\frac{1}{\epsilon}\left(\frac{\alpha^K}{\epsilon}\right)^{\frac{1}{3}}+C\epsilon.
\end{eqnarray}
Let $s\to +\infty$ first and $\epsilon\to 0$  next in \eqref{F}, we acquire $F(s)\to 0$ with $K\to+\infty$ as $s\to+\infty$.\par

\textbf{Estimate of $G(s)$:} Applying  Lebesgue's dominated convergence theorem with Remark \ref{propnn}, we have
\begin{equation}\label{les}
\lim_{s\to+\infty}G(s)=\lim_{s\to+\infty}\sum_{n=1}^{+\infty}\alpha^{n}e^{-4 (\frac{1}{6})^{n}s}l_{n+1}^2=\sum_{n=1}^{+\infty}\lim_{s\to+\infty}\alpha^{n}e^{-4 (\frac{1}{6})^{n}s}l_{n+1}^2=0.
\end{equation}
\par
Therefore, by the estimates of $E(s)$, $F(s)$ and $G(s)$, we conclude $I_0^{\frac{1}{3}}[\varphi_1](s)\to0$ as $s\to+\infty$. Similarly,  $I_{\frac{2}{3}}^1[\varphi_1](s)\to0$ as $s\to+\infty$. Take the upper limit at both of the sides of \eqref{lbd1} and we complete the proof.
\end{proof}

\textbf{Proof of Theorem \ref{neumann}:} Combining Lemma \ref{nnl} and Lemma \ref{nnu}, we finish the proof.

\section{Proof of Theorem \ref{robin}}

In this section, we provide a counterexample that demonstrates the non-existence of the limit of principal
eigenvalue $\lambda(s,m(r))$ as $s\to+\infty$, thereby proving Theorem \ref{robin}.
To establish this result, we begin by introducing a lemma that illustrates the continuity dependence of the principal eigenvalue $\lambda(s,m(r))$ on $m(r)$. Subsequently, we utilize Theorem \ref{dirichlet}, Theorem \ref{neumann} and the aforementioned lemma to fully establish the proof of Theorem \ref{robin}.
%construct the counterexample such that there exist two subsequences $\{\lambda_1^{\mathcal{N}}(s_i)\}_{i=1}^{+\infty}$ and $\{\lambda_1^{\mathcal{N}}(t_i)\}_{i=1}^{+\infty}$ of the principal eigenvalue $\{\lambda_1^{\mathcal{N}}(s)\}_{s\in \mathbb{R}^+}$ of the eigenvalue problem \eqref{eq} converges to two different constants $\lambda_1^{\mathcal{NN}}(a,b)$ and $\lambda_1^{\mathcal{DD}}(a,b)$ as $i\to+\infty$, respectively. %along two different subsequences of $s\to\infty$.
\begin{lemma}\label{lemma}
For the principal eigenvalue $\lambda(s,m(r))$ of \eqref{eq}, with $m(r)=m_i(r)$, $i=1,2$,
%For functions $m_i(r)$, $i=1,2$ belonging to $C^1([0,1])$, satisfying \eqref{eq},
%let   $\varphi(r,m_i(r))$ denote the principal eigenvalues and eigenfunctions corresponding to the following problems
%\begin{equation}
%\left\{\begin{array}{l}
%-\varphi^{\prime \prime}(r)-\frac{d-1}{r}\varphi'(r)-2 sm_i ^{\prime}(r) \varphi^{\prime}(r)+c(r) \varphi(r)=\lambda(s) \varphi(r), \\
%\varphi^{\prime}(0)=\varphi^{\prime}(1)=0,\quad \int_0^1 r^{d-1} e^{2 s m_i(r)} \varphi^2(r)dr=1,\label{rb}
%\end{array}\right.
%\end{equation}
%respectively. Then, for any fixed $s$,
it holds that
\begin{equation}
\left|\lambda(s,m_1(r))- \lambda(s,m_2(r)) \right|\le c^*\left(e^{4s||m_1-{m_2}||_{C([0,1])}}-1\right),\label{rbes}
\end{equation}
where $c^*=\max_{[0,1]}c(r)$.
\end{lemma}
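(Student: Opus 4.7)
The plan is to use the Rayleigh-quotient form of the variational principle \eqref{voe} and compare the two eigenvalues by cross-substituting test functions. Let $\eta = \|m_1 - m_2\|_{C([0,1])}$, and let $\varphi_i(r)$ denote the principal eigenfunction associated with $m_i(r)$, normalized so that $\int_0^1 r^{d-1} e^{2sm_i(r)} \varphi_i^2(r)\, dr = 1$, $i = 1, 2$. The key observation is the pointwise two-sided bound
\begin{equation*}
e^{-2s\eta}\, e^{2sm_2(r)} \;\le\; e^{2sm_1(r)} \;\le\; e^{2s\eta}\, e^{2sm_2(r)},\qquad r\in[0,1],
\end{equation*}
which allows us to convert any integral weighted by $e^{2sm_1}$ into one weighted by $e^{2sm_2}$ at the cost of a multiplicative factor between $e^{-2s\eta}$ and $e^{2s\eta}$.

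Next, I use $\varphi_2$ as a trial function in the variational characterization of $\lambda(s,m_1(r))$ given by \eqref{voe}. The numerator $\int_0^1 r^{d-1} e^{2sm_1}(|\varphi_2'|^2 + c\varphi_2^2)\, dr$ is at most $e^{2s\eta}$ times the same integral with weight $e^{2sm_2}$, which by the normalization and definition of $\varphi_2$ equals $e^{2s\eta}\lambda(s,m_2(r))$. The denominator $\int_0^1 r^{d-1} e^{2sm_1} \varphi_2^2\, dr$ is at least $e^{-2s\eta}$ times $\int_0^1 r^{d-1} e^{2sm_2}\varphi_2^2\, dr = 1$. Combining these two bounds yields
\begin{equation*}
\lambda(s,m_1(r)) \;\le\; e^{4s\eta}\,\lambda(s,m_2(r)).
\end{equation*}

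Subtracting and using the uniform upper bound $\lambda(s,m_2(r)) \le c^*$ from \eqref{ubd} (which holds under our standing assumption $c > 0$), I obtain $\lambda(s,m_1(r)) - \lambda(s,m_2(r)) \le c^*(e^{4s\eta} - 1)$. The roles of $m_1$ and $m_2$ are symmetric in the argument, so exchanging them gives the matching lower bound, and \eqref{rbes} follows. There is no real obstacle here; the only thing to be mindful of is that both the numerator is weighted up and the denominator is weighted down, which is why the exponent is $4s\eta$ rather than $2s\eta$, and that positivity of $c$ (assumed without loss of generality after the reduction in Section \ref{S:pre}) is needed to conclude $\lambda(s,m_2(r)) \ge 0$ so that the factor $\lambda(s,m_2(r))$ may be bounded above by $c^*$ in absolute value.
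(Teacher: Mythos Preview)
Your proof is correct and follows essentially the same approach as the paper: both use $\varphi_2$ as a trial function in the Rayleigh quotient for $\lambda(s,m_1)$, apply the pointwise bounds $e^{-2s\eta}e^{2sm_2}\le e^{2sm_1}\le e^{2s\eta}e^{2sm_2}$ to the numerator and denominator separately to obtain $\lambda(s,m_1)\le e^{4s\eta}\lambda(s,m_2)$, and then invoke \eqref{ubd} and symmetry. The only cosmetic difference is that the paper first rewrites the difference $\lambda(s,m_1)-\lambda(s,m_2)$ as a variational formula with potential $c(r)-\lambda(s,m_2)$ before substituting the test function, but this amounts to the same computation.
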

\begin{proof}
 By the variational characterization \eqref{voa}, we have
\begin{eqnarray}
&&\left|\lambda(s,m_1(r))- \lambda(s,m_2(r)) \right|\nonumber\\
&=&\min_{\int_0^1 r^{d-1} e^{2sm_1(r)}|\phi(r)|^2dr=1}\int_0^1 r^{d-1} e^{2sm_1(r)}\left[|\phi'(r)|^2+(c(r)-\lambda(s,m_2(r)))|\phi(r)|^2\right]dr,\label{vart}
\end{eqnarray}
for $\phi\in H^1([0,1])$.
Taking the test function $\phi(r)=\varphi_2(r)$, that is the principal eigenfunction corresponding to \eqref{eq} with  $m(r)=m_2(r)$, from \eqref{vart}, we obtain
\begin{eqnarray}
&&\left|\lambda(s,m_1(r))- \lambda(s,m_2(r)) \right|\nonumber\\
&\le&\frac{\int_0^1 r^{d-1} e^{2sm_1(r)}\left(|\varphi_2'(r)|^2+(c(r)-\lambda(s,m_2(r)))|\varphi_2(r)|^2\right)dr}{\int_0^1r^{d-1}e^{2sm_1(r)}|\varphi_2(r)|^2dr}\nonumber\\
&\le& e^{4s||m_1-{m_2}||_{C([0,1])}}\frac{\int_0^1 r^{d-1} e^{2sm_2(r)}\left(|\varphi_2'(r)|^2+c(r)|\varphi_2(r)|^2\right)dr}{\int_0^1 r^{d-1} e^{2sm_2(r)}|\varphi_2(r)|^2dr}-\lambda(s,m_2(r))\nonumber\\
&\le& \lambda(s,m_2(r))\left(e^{4s||m_1-{m_2}||_{C([0,1])}}-1\right)%\nonumber\\
\le c^*\left(e^{4s||m_1-{m_2}||_{C([0,1])}}-1\right),\label{p}
\end{eqnarray}
where the last inequality follows from \eqref{ubd}. By changing the subscript number, we conclude \eqref{rbes}.
%$$
%|\lambda_1-\lambda_2|(s)\le c^*(e^{4s\|m_1-m_2\|_{C([0,1])}}-1).
%$$
\end{proof}
Recalling Definition \ref{DM} and \ref{e1}, we have
\begin{remark}\label{fold}
If $m(r)\in S_{\mathcal{DD}}$, then there exist $\delta> 0$ and a $C^1([0,1])$ function
\begin{equation*}
m^*(r)=\left\{
\begin{array}{l}
m(r),\quad r\in[0,\delta)\cup (1-\delta,1]\\
-m(r),\quad r\in[\delta,\frac{1}{3})\cup (\frac{2}{3},1-\delta]\\
0,\quad r\in[\frac{1}{3},\frac{2}{3}]
\end{array}\right.
\end{equation*}
satisfying $m^*(r)\in S_{\mathcal{NN}}$. %The parameters in $S_{\mathcal{NN}}$ may be different from ones in $S_{\mathcal{DD}}$ with same notations.
\end{remark}
We are now at the position to give the proof of our main result.\par
\textbf{Proof of Theorem \ref{robin}:}
% Assume that $c(x)>\lambda_1^{\mathcal{DD}}(a,b)$ (and  $c(x)>\lambda_1^{\mathcal{NN}}(a,b)$ as well) on $[0,a]\cup[b,1]$ and  the eligible $c(x)$ exists.
\begin{comment}
\red{We aim to construct a infinitely oscillating function $\hat{m}(x)\in C^1[0,1]$ and find two subsequences $\{s_{2n-1}\}_{n=1}^{+\infty}$ and $\{s_{2n}\}_{n=1}^{+\infty}$ of $s\to+\infty$ such that the principal eigenvalue, $\hat{\lambda}(s)$, corresponds to  $\hat{m}(x)$ satisfying
$$
\lim_{n\to+\infty}\hat{\lambda}(s_{2n-1})=\lambda_1^{\mathcal{NN}}(a,b)\quad \text{and}\quad \lim_{n\to+\infty}\hat{\lambda}(s_{2n})=\lambda_1^{\mathcal{DD}}(a,b),
$$
Since $\lambda_1^{\mathcal{NN}}(a,b)<\lambda_1^{\mathcal{DD}}(a,b)$, we can see the non-existence of the limit of the principal eigenvalue, $\hat{\lambda}(s)$ directly.}
\end{comment}

It is indeed assumed that the counterexample $\hat{m}(r)$ satisfies $\hat{m}(r)=\hat{m}(1-r)$ for $r\in[0,1]$ in Section \ref{S:pre}. In the subsequent construction, we will fucus on constructing $\hat{m}(r)$ on the interval $[0,\frac{1}{3}]$ and the part on $[\frac{2}{3},1]$ can be obtained through symmetry. To facilitate the understanding of the construction process, we will divide it into three steps.

\textbf{Step 1:} (Construction of a sequence of functions $\{m_n(r)\}_{n=1}^{+\infty}$.)

Firstly, we construct a sequence of functions $\{m_n(r)\}_{n=1}^{+\infty}$ such that the limit as $n\to+\infty$ is the desired function $\hat{m}(r)$. %And for each function $m_n(r)\in C^1[0,1]$, $n\ge1$, we denote $\lambda_n(s)$ to be the principal eigenvalue of the problem
%\begin{equation*}
%\left\{
%\begin{array}{l}
%-\varphi'(r)-\frac{d-1}{r}\varphi'(r)-2sm_n'(r)\varphi'(r)+c(r)\varphi(r)=\lambda_n(s)\varphi(r),\quad 0<r<1,\\
%\varphi'(0)=\varphi'(1)=0,\quad \int_0^1r^{d-1}e^{2sm_n(r)}\varphi^2(r)dr=1.
%\end{array}\right.\label{pb}
%\end{equation*}

We select a function $m_1(r)\in S_{\mathcal{DD}}$ such that
$m_1(r)\ge 0$ on $[x_n,y_n]$ with $m_1(z_n)=m_1'(z_n)=0$ where $z_n$ is the mid-point of $x_n$ and $y_n$ for $n\in \mathbb{N}$ and the points $x_n$, $y_n$ are ones defined in $S_{\mathcal{DD}}$.
It is easy to see that this type $m_1(r)$ exists % and satisfies $\lambda_1(s)\to\lambda_1^{\mathcal{DD}}(a,b)$ as $s\to+\infty$.\par
 %To see clearly, we divide it into many sub-steps.
 and  from Theorem \ref{dirichlet}, we have
$$\lim_{s\to+\infty}\lambda(s,m_1(r))=\lambda^{\mathcal{DD}}.$$
 Thus, %take $\epsilon_1=\Delta\mbox{/}2$ and
 there exists $s_1>8\mbox{/}\ln(1+\rho\mbox{/}2c^*)$ such that
 $$|\lambda(s_1,m_1(r))-\lambda^{\mathcal{DD}}|<\frac{\lambda^{\mathcal{DD}}-\lambda^{\mathcal{NN}}}{2}\triangleq\frac{\rho}{2},$$
 where $\rho>0$ is a constant. %=\epsilon_1.$$
 %We make a small adjustment to  $m_1(r)$ to get $m_2(r)$  such that $\lambda_2(s)$ remains close to $\lambda_1^{\mathcal{DD}}(a,b)$ at $s=s_1$ and $\lambda_2(s)\to\lambda_1^{\mathcal{NN}}(a,b)$ as $s\to+\infty$.
 For the given $s_1$, let $\tau_1=1\mbox{/}s_1^2$, by the continuity of $m_1(r)$ and the fact $m_1(\frac{1}{3})=0$, there exists $\delta(\tau_1)>0$, such that $|m_1(r)|<\tau_1$, as $r\in(\frac{1}{3}-\delta(\tau_1),\frac{1}{3})$. \par
  We denote $\kappa(\tau_1)=\inf\{z_n|z_n>\frac{1}{3}-\delta(\tau_1)\}$ and define
$$
m_2(r)=\left\{
\begin{array}{l}
m_1(r),\quad r\in(0,\kappa(\tau_1)],\\
-m_1(r),\quad r\in(\kappa(\tau_1),\frac{1}{3}).
\end{array}\right.
$$
Note that $m_2(r)\in C^1([0,1])$ with
 $|m_1(r)-m_2(r)|\le 2\tau_1$. From Lemma \ref{lemma} we obtain
$$
|\lambda(s_1,m_2(r))-\lambda(s_1,m_1(r))|\le c^*(e^{\frac{8}{s_1}}-1)<\frac{\rho}{2},
$$
and
$$
|\lambda(s_1,m_2(r))-\lambda^{\mathcal{DD}}|\le |\lambda(s_1,m_2(r))-\lambda(s_1,m_1(r))|+|\lambda(s_1,m_1(r))-\lambda^{\mathcal{DD}}|\le \rho,
$$
by the triangle inequality.
Moreover, $m_2(r)\in S_{\mathcal{NN}}$ from Remark \ref{fold} and
then  we obtain from Theorem \ref{neumann} that
$$\lim_{s\to+\infty}\lambda(s,m_2(r))=\lambda^{\mathcal{NN}}.$$
 Thus, there exists $s_2>\max\{s_1,8\mbox{/}\ln(1+\rho\mbox{/}3c^*)\}$ such that $$|\lambda(s_2,m_2(r))-\lambda^{\mathcal{NN}}|<\frac{\rho}{3}.$$
% Again, we make a small adjustment to $m_2(r)$ to construct  $m_3(r)$  such that $\lambda_3(s)$ remains close to $\lambda_1^{\mathcal{NN}}(a,b)$ at $s=s_2$ and $\lambda_3(s)\to\lambda_1^{\mathcal{DD}}(a,b)$ as $s\to+\infty$.
 For the given $s_2$, let $\tau_2=1\mbox{/}s_2^2$, by the continuity of $m_2(r)$, there exists $\delta(\tau_2)\in(0,\delta(\tau_1))$, such that $|m_2(r)|<\tau_2$, as $r\in(\frac{1}{3}-\delta(\tau_2),\frac{1}{3})$.
 \par

 Define $\kappa(\tau_2)=\inf\{z_n|z_n>\frac{1}{3}-\delta(\tau_2)\}$, and
$$
m_3(r)=\left\{
\begin{array}{l}
m_2(r),\quad r\in(0,\kappa(\tau_2)],\\
-m_2(r),\quad r\in(\kappa(\tau_2),\frac{1}{3}).
\end{array}\right.
$$
Thus $m_3(r)\in C^1([0,1])$ with $|m_2(r)-m_3(r)|\le 2\tau_2$. From Lemma \ref{lemma}, we obtain
$$
|\lambda(s_2,m_3(r))-\lambda(s_2,m_2(r))|\le c^*(e^{\frac{8}{s_2}}-1)<\frac{\rho}{3}.
$$
And by the triangle inequality
$$
|\lambda(s_2,m_3(r))-\lambda^{\mathcal{NN}}|\le |\lambda(s_2,m_3(r))-\lambda(s_2,m_2(r))|+|\lambda(s_2,m_2(r))-\lambda^{\mathcal{NN}}|\le \frac{2\rho}{3}.
$$
Moreover, $m_3(r)\in S_{\mathcal{DD}}$ by the constructions of $m_1(r)$ and $m_2(r)$, and
then  we obtain from Theorem \ref{dirichlet} that
$$\lim_{s\to+\infty}\lambda(s,m_3(r))=\lambda^{\mathcal{DD}}.$$
\par
Repeat the procedure above, we derive:

For $\{m_{2n+1}(r)\}_{n=1}^{+\infty}$ obtained in this construction, we have $m_{2n+1}(r)\in S_{\mathcal{DD}}$ and
there exists $s_{2n}>\max\{s_{2n-1},8\mbox{/}\ln(1+\rho\mbox{/}(2n+1)c^*)\}$ such that
\begin{equation}
|\lambda(s_{2n},m_{2n+1}(r))-\lambda^{\mathcal{NN}}|<\frac{2\rho}{2n+1}\label{E:limitNN}.
\end{equation}
By adjusting $\{m_{2n+1}(r)\}_{n=1}^{+\infty}$, we can construct $\{m_{2n+2}(r)\}_{n=1}^{+\infty}\subset S_{\mathcal{NN}} $ such that
\begin{eqnarray}
|\lambda(s_{2n+1},m_{2n+2}(r))-\lambda^{\mathcal{DD}}|\le \frac{\rho}{2n+1},\label{nn1}
\end{eqnarray}
where $s_{2n+1}>\max\{s_{2n},8\mbox{/}\ln(1+\rho\mbox{/}(2n+2)c^*)\}$.
%and there exists $s_{2n+2}>\max\{s_{2n+1},8\mbox{/}\ln(1+\rho\mbox{/}(2n+3)c^*)\}$ such that
%\begin{equation}\label{robinm}
%|\lambda_{2n+2}(s_{2n+2})-\lambda_1^{\mathcal{NN}}(a,b)|<\frac{\rho}{2n+3}.
%\end{equation}
Then we obtain the sequence of functions $\{m_{n}(r)\}_{n=1}^{+\infty}$. Observe that $s_n\to+\infty$, $\tau_n\to 0$, $\delta(\tau_n)\to 0$ and $\kappa(\tau_n)\to \frac{1}{3}$ as $n\to+\infty$.

\textbf{Step 2:} (The construction of $\hat{m}(r)$)

Define $\hat{m}(r)=\lim_{n\to+\infty}m_n(r)$ and $m^*(r)=\lim_{n\to+\infty}m_n'(r)$. Since $m_n(r)\in C^1([0,1])$, to prove $\hat{m}(r)\in C^1([0,1])$, we only need to check that $\hat{m}'(r)=m^*(r)\in C([0,1])$. We will focus on the interval $[0,\frac{1}{3}]$ because $[\frac{2}{3},1]$ can be treated in the same way and $m_n(r)$ vanishes on $[\frac{1}{3},\frac{2}{3}]$ for all $n$.\par

 Noting that  $m_n'(r)$ is continuous and $\{m_n(r)\}_{n=1}^{+\infty}$ converges to $\hat{m}(r)$ pointwise according to the construction. It suffices to show that $\{m_n'(r)\}_{n=1}^{+\infty}$  converges uniformly to $m^*(r)$ on $[0,\frac{1}{3}]$. Since that $m_{n}'(r)$ is continuous on $[0,\frac{1}{3}]$ and $m_{n}'(\frac{1}{3})=0$, for any $\epsilon>0$, we can find a constant $\hat{\delta}>0$ such that
$$
|m_{n}'(r)-m_{n}'(a)|=|m_{n}'(r)|<\epsilon, \quad r\in\left(\frac{1}{3}-\hat{\delta},\frac{1}{3}\right].
$$
So we could taking $N_0$ large enough such that $\kappa(\tau_{N_0})>\frac{1}{3}-\hat{\delta}$. Then for any integers $n_1, n_2$ satisfying $n_2>n_1>N_0$, we have $m_{n_2}'(r)-m_{n_1}'(r)=0$ on $[0,\kappa(\tau_{n_1})]$ and
$$
|m_{n_2}'(r)-m_{n_1}'(r)|\le 2|m_{n_1}'(r)|<\epsilon, \quad r\in \left(\kappa(\tau_{n_1}),\frac{1}{3}\right].
$$
Hence, $m_n'(r)$ uniformly converge to $m^*(r)$ on $[0,\frac{1}{3}]$ and $\hat{m}'(r)=m^*(r)\in C([0,1])$.
\par

\textbf{Step 3:} (The non-existence of limit of $\hat{\lambda}(s)$ as $s\to+\infty$.)

%We present that the principal eigenvalue $\hat{\lambda}(s)$ corresponding to $\hat{m}(r)$ does not converge as $s\to+ \infty$. %Indeed, we  select two different subsequences $\{s_{2n+1}\}_{n=1}^{+\infty}$ and $\{s_{2n+2}\}_{n=1}^{+\infty}$ of $s\to+\infty$, such that $\hat{\lambda}(s_{2n+1})\to\lambda_1^{\mathcal{DD}}(a,b)$ and $\hat{\lambda}(s_{2n+2})\to\lambda_1^{\mathcal{NN}}(a,b)$, as $n\to+\infty$, respectively.\par

By the definition of the sequence $\{m_n(r)\}_{n=1}^{+\infty}$ and $\hat{m}(r)$, for each $n$, we obtain
$$
|m_{2n+2}(s_{2n+1})-\hat{m}(r)|<2\tau_{2n+2}.
$$
Then, by  Lemma \ref{lemma}, we have
$$
|\lambda(s_{2n+1},m_{2n+2})-\lambda(s_{2n+1},\hat{m}(r))|\le c^*\left(e^{\frac{8}{s_{2n+2}}}-1\right)\le \frac{\rho}{2n+3}.
$$
and combining with \eqref{nn1} and utilizing  the triangle inequality,  we obtain
$$
|\lambda(s_{2n+1},\hat{m}(r))-\lambda^{\mathcal{DD}}|\le \frac{\rho}{n+1}+\frac{\rho}{2n+3}<\frac{2\rho}{n}.
$$
Thus, $\lim\limits_{n\to+\infty }\lambda(s_{2n+1},\hat{m}(r))= \lambda^{\mathcal{DD}} $. Similarly,  by \eqref{E:limitNN}, we could derive that $\lambda(s_{2n},\hat{m}(r))\to \lambda^{\mathcal{NN}} $ as $n\to +\infty$ and finish our proof.

\section*{Acknowledgement}
X. Bai was supported by NSF of China (Nos.12271437, 12071203). X. Xu was supported by Nankai Zhide Foundation. K. Zhang  was supported by Nankai Zhide Foundation.
M. Zhou was partially supported by National Key Research and Development Program of China (2021YFA1002400, 2020YFA0713300) and Nankai Zhide Foundation, NSF of China (Nos.12271437, 11971498), the Fundamental Research Funds For Central Universities (Nankai University 63231073) and Tianjin Natural Science Foundation Key Project (No.22JCZDJ0370).

\end{document}